\documentclass[11pt,letterpaper]{amsart}

\usepackage{header}
\usepackage{url}

\setcounter{section}{0}
\begin{document}
\title{Directional Ballistic transport for partially periodic Schr\"{o}dinger operators on $\mathbb Z^2$}
\author{Adam Black}
\address{Department of Mathematics\\
  University of California, Berkeley\\
 Berkeley, CA 94720}
\email[]{adamblack@berkeley.edu}
\thanks{This material is based upon work supported by the National Science Foundation under Awards No. 2503339}
\author{David Damanik}
\address{Department of Mathematics\\
  Rice University\\
 Houston, TX, 77005}
\email[]{damanik@rice.edu }
\thanks{D.\ D.\ was supported in part by NSF grants DMS--2054752 and DMS--2349919}
\author{Tal Malinovitch}
\address{Department of Mathematics\\
  Rice University\\
 Houston, TX 77005}
  \email[]{tal.malinovitch@rice.edu}
\author[Young]{Giorgio Young}
\address{Department of Mathematics\\
  The University of Wisconsin\\
 Madison, WI 53706}
  \email[]{gfyoung@wisc.edu}
  \thanks{G.Y.\ acknowledges the support of the National Science Foundation through grant DMS--2303363.}
\maketitle
\begin{abstract}
     We study the transport properties of Schr\"{o}dinger operators on $\bbZ^2$ with potentials that are periodic in one direction and compactly supported in the other. Such systems are known to produce \emph{surface states} that are weakly confined near the support of the potential. We demonstrate that surface states exhibit what we describe as directional ballistic transport, characterized by a strong form of ballistic transport along the periodic direction and its absence in the compactly supported one. By showing that the scattering states exhibit ballistic transport, we obtain ballistic transport for a dense subset of all of $\ell^2(\mathbb {Z} ^2)$.
\end{abstract}

\section{Introduction}
\subsection{Motivation and main results}
Recently, there has been much interest in Schr\"{o}dinger operators 
\begin{align*}
     H=-\frac{1}{2}\Delta+V   
\end{align*}
 with potentials $V$ supported near a hyperplane, see, e.g., \cite{bentosela2012guided,black2022scattering, filonov, filonovKlopp, hoang2014absence, Richard} and the references therein.
 Intuitively, one expects that any solution to the associated Schr\"{o}dinger equation, given by
\begin{align}\label{eq:Schrodt}
    i\partial_t\psi=H\psi, \; \psi(0)=\psi_0,
\end{align}
should decompose into a piece that radiates into the background and one that is governed by the lower-dimensional dynamics.
Indeed, it was shown in \cite{black2022scattering} that if $V$ is a real-valued bounded potential supported near a proper subspace of $\bbR^d$, then $L^2(\bbR^d)$ decomposes into the space of scattering states and the space of \emph{surface states} that may only evolve away from the potential at a sublinear rate. This result is agnostic to the exact choice of the potential $V$, so it is natural to ask how different properties of $V$ affect the dynamics within the surface subspace. 

In this paper, we study partially periodic surface potentials on $\bbZ^2$, and in particular their dynamical properties. Among various choices of surface potentials, partially periodic ones are important because they may serve as waveguides; see, e.g., \cite{photonicsreview,johnson2000linear,korotyaev2017schrodinger,meade2008photonic} for related systems in the photonics literature. In this context, this means that the surface states should exhibit transport primarily along the surface while being essentially trapped in the transverse directions.
Although such a picture is physically quite natural, little seems to be known rigorously about the dynamics of surface states.
To our knowledge, the only work addressing the dynamics of surface states in partially periodic models is the work of Davies-Simon \cite{DaviesSimon} who showed that surface states are concentrated near the support of the potential in a rather weak sense, see \eqref{eq:DSdynamical} below. In particular, this result is silent on the nature of the propagation in the directions of periodicity.  
More generally, partially periodic systems remain poorly understood compared to their fully periodic counterparts. We refer the reader to Section 9.4 of \cite{Kuchment} for an overview of what is known mathematically.
Recent works on such models \cite{filonovKlopp,filonov,hoang2014absence}, reviewed below, have sought to establish continuous spectrum for various classes of continuum partially periodic surface models.
Using such results, one may appeal to the RAGE theorem \cite{AmreinGeorgescu, Ruelle} and conclude that solutions to \eqref{eq:Schrodt} spread in a qualitative sense. However, this conclusion alone is unsatisfactory on two counts;  the first is that it does not reflect the anisotropy of the model, in particular, one may wonder about the direction of spread for surface states, and the second is that it does not reflect the more quantitative notions of spread we may expect from the periodicity of the embedded lower-dimensional model.
Indeed, it is classical that fully periodic potentials are known to induce a particularly strong form of ballistic transport \cite{AschKnauf}, so one expects an analogous \emph{directional} result for surface states in the partially periodic setting, in addition to an equivalently strong ballistic transport result for all states. Thus, our work here seeks to address both of these dynamical questions.

To be more precise, let us now specify the class of operators we wish to consider. Throughout, let the first coordinate of $\bbZ^2$ be labeled by $x$ and the second by $y$. We say that a real-valued potential $V(x,y)$ in $\ell^\infty(\bbZ^{2})$ is \emph{strip periodic} if $V$ is supported in $\{-R,-R+1,\ldots,R\}\times\bbZ$ for some $R$ in  $\bbZ_{>0}$ and there exists a period $L$ in $\bbZ_{>0}$ such that 
\begin{align*}
        V(x,y+L)=V(x,y),\, \,\,\forall (x,y)\in \bbZ^{2}.
\end{align*}
Furthermore, let $H_0$ be the Laplacian on $\bbZ^2$, i.e., 
\begin{align*}
    (H_0\psi)(x,y)=\psi(x+1,y)+\psi(x-1,y)+\psi(x,y+1)+\psi(x,y-1).
\end{align*}
For $V$ strip periodic on $\ell^2(\bbZ^{2})$, we consider the self-adjoint Schr\"{o}dinger operator
\begin{align}\label{Operator}
    H=H_0+V.
\end{align}

To state our first dynamical result, we introduce the position operators $X$ and $Y$
\begin{align*}
    (X\psi)(x,y)=x\psi(x,y),\;(Y\psi)(x,y)=y\psi(x,y),
\end{align*}
with the natural domains $D(X)$ and $D(Y)$.
We also introduce the vector-valued operator $\vec{Q}=(X,Y)$ with domain $D(\vec{Q})=D(X)\cap D(Y)$.
For an operator $A$, we denote by $A_H(t)$ the Heisenberg-evolved operator $e^{itH}Ae^{-itH}$. We say that a state $\psi\in  D(\vec{Q})$ undergoes \emph{ballistic transport} if 
\begin{align*}
    \lim_{t\rightarrow\infty}\frac{1}{t}\vec{Q}_H(t)\psi
\end{align*}
exists and is non-zero. As we will discuss below, this notion of ballistic transport is a particularly strong one. The fact that wave packets in crystals undergo ballistic transport is related to the motion of electrons in solids and has been established mathematically, first in continuum models in the 1990's \cite{AschKnauf}, later for discrete models in one and arbitrary dimensions, respectively \cite{damanik2015quantum, Fillman2021}, and quite recently extended to more general graphs \cite{MS}. It will follow from our main theorem that a dense set of states exhibits ballistic transport in this form:

\begin{theorem}\label{mainCor}
        Let $V$ be a strip periodic potential on $\bbZ^{2}$ and let $H$ be the associated Schr\"{o}dinger operator $H=H_0+V$. Then a dense subset of $\ell^2(\bbZ^{2})$ exhibits ballistic transport.
\end{theorem}

Our main theorem is concerned with the subspace of surface states, which we define via the partial Floquet transform $\calU$. The map $\calU$, defined in Section~\ref{existenceSection}, is a unitary map that conjugates $H$ to a direct integral of operators  
\begin{align*}
   \int\limits_{\calB}^\oplus H(k)\:\frac{dk}{\abs{\calB}},
\end{align*}
where $\calB=[0,2\pi/L)$ is the Brillouin zone and each $H(k)$ is a self-adjoint operator on the discrete cylinder $\bbZ\times \bbZ_L$.
Relative to this decomposition, the space of surface states is given by
\begin{align*}
    \calH_{\textrm{sur}}:=\calU^*\int\limits_{\calB}^\oplus \mathcal{H}_{\textrm{pp}}(H(k))\:\frac{dk}{\abs{\calB}}.
\end{align*}
Note that by Appendix A of~\cite{DaviesSimon}, the projection onto $\calH_{\textrm{pp}}(H(k))$ is a measurable function of $k$, so this direct integral is well-defined.
We also mention that this subspace exactly coincides with the surface subspace defined for general strip potentials in \cite{black2022scattering}; see Section 6.2 of that work.

From the work of \cite{AschKnauf}, it is well-known that the transport properties of fully periodic operators are a consequence of the analytic variation of Bloch waves. Thus, our next theorem provides a suitable analytic Bloch theory in the partially periodic context:
\begin{theorem}
\label{thm:analyticity}
    Let $H$ be defined as in \eqref{Operator}. Then there is a countable collection of open sets $U_l\subset\calB $, countably many functions $\lambda_{l,i}:U_l\rightarrow \bbR$, and corresponding finite rank projectors $k\in \calB\mapsto \pi_{l,i}(k):\ell^2(\bbZ\times \bbZ_L)\rightarrow \ell^2(\bbZ\times\bbZ_L)$ with pairwise disjoint ranges such that
    \begin{enumerate}
        \item\label{item:thm1} We may write
        \begin{align*}
            \calU H\vert_{\calH_{\mathrm{sur}}}\calU^*=\sum_{l,i} \int\limits_{U_l}^\oplus\lambda_{l,i}(k)\pi_{l,i}(k)\:\frac{dk}{|\calB|}.
        \end{align*}
        \item\label{item:thm2}  Each $\lambda_{l,i}$ and $\pi_{l,i}$ is analytic on the complement of a finite set.
        \item\label{item:thm3} The set of $k$ for which $\frac{d\lambda_{l,i}}{dk}=0$ is measure zero. 
    \end{enumerate}
\end{theorem}
We note that the proof of this theorem is more involved than the analogous fully periodic result \cite{wilcox1978theory} because the operators $H(k)$ do not have compact resolvent.
This creates the possibility of eigenvalues embedded in the essential spectrum of $H(k)$ whose $k$-variation requires some care to analyze.
See Section~\ref{overviewSection} for a more in-depth discussion.

This theorem, combined with the characterization of $(\mathcal{H}_{\textrm{sur}})^\perp$ as the space of scattering states (see Proposition~\ref{Completeness}), then yields an immediate corollary on the spectral type of $H$:
\begin{corollary}
    The operator \eqref{Operator} has purely absolutely continuous spectrum.
\end{corollary}

This corollary forms a discrete analog of the main result in \cite{filonovKlopp} for the $\bbZ^2$ setting, with a different method of proof. 

In the partially periodic setting, one expects a more refined notion of ballistic transport that accounts for the anisotropy of the system to hold for states in $\mathcal H_{\textrm{sur}}$: we say that $\psi\in D(\vec{Q})$ exhibits \emph{directional ballistic transport} if 
\begin{align*}
    \lim_{t\rightarrow\infty}\frac{1}{t}X_H(t)\psi=0
\end{align*}
and
\begin{align*}
    \lim_{t\rightarrow\infty}\frac{1}{t}Y_H(t)\psi
\end{align*}
exists and is non-zero. In other words, there is a nonzero asymptotic velocity in the periodic direction, and an asymptotic velocity of zero in the compactly supported direction. This latter condition is referred to as an absence of ballistic transport.
Our main theorem is then as follows:
\begin{theorem}\label{thm:Z2ballistic}
    Let $V$ be a strip periodic potential on $\bbZ^{2}$ and $H$ the associated Schr\"{o}dinger operator $H=H_0+V$. Then any $\psi\in D(\vec{Q})\cap \calH_{\mathrm{sur}}\setminus\{0\}$ exhibits directional ballistic transport.
\end{theorem}

Before turning to a discussion of prior work, we also mention two auxiliary results that may be of independent interest. First, the proof of the absence of ballistic transport in the $x$-direction is closely related to Simon's classic result on the absence of ballistic transport for pure point states \cite{simon1990absence}. To our knowledge, the forms of this theorem that appear in the literature pertain to operators with only pure point spectrum. In contrast, for our purposes, we must generalize to operators that may also have continuous spectrum, (see Theorem~\ref{Thm:Simon}). This result is natural, in particular, because many important operators have both spectral types, especially on $\bbR^d$. In fact, the existence of bounded potentials in the continuum setting that induce completely pure point spectrum is a hard problem. In one dimension, this was established for the continuum Anderson model with absolutely continuous random variables by Kotani-Simon \cite{kotaniSimon} and for singular random variables by Damanik-Sims-Stolz \cite{DSS}. In higher dimensions, this remains a major open problem.\par
Second, Theorem \ref{mainCor} is a consequence of the fact that scattering states exhibit ballistic transport in the anisotropic setting. While this fact is known for short-range potentials \cite{damanik2025ballistic}, we are unaware of a reference that applies to potentials with anisotropic short-range decay. So, we provide a general criterion for a scattering state to exhibit ballistic transport \emph{in a certain direction}. This is inspired by Cook's criterion for the existence of the wave operator.

\subsection{Prior work}
In both the discrete and continuum settings, related classes of operators have been studied since at least the work of Davies-Simon \cite{DaviesSimon}, who studied potentials on $\bbR^d$ that decay in the $x_1$ direction, but are periodic in all the others. 
Their Theorem~6.1 shows that any surface state $\psi$ obeys
\begin{align}
\label{eq:DSdynamical}
    \lim_{a\rightarrow\infty}\sup_t\int_{|x_1|>a}|(e^{-itH}\psi)(x)|^2\:dx=0.
\end{align}
More recently, in the continuum setting, Filonov-Klopp \cite{filonovKlopp} and Filonov \cite{filonov} have shown that if $V$ is periodic in some variables, under different decay assumptions in the other variables, the operator $H$ has either no eigenvalues or purely absolutely continuous spectrum. Specifically, if $\abs{V(x,y)}<C|x|^{-\rho}$ for $\rho>1$, then $H$ has no eigenvalues \cite{filonov} and if $V$  decays superexponentially, then $H$ has purely absolutely continuous spectrum \cite{filonovKlopp}. Prior to Filonov \cite{filonov}, Hoang-Radosz \cite{hoang2014absence} obtained a similar result on the absence of eigenvalues for Helmholtz and Schr\"{o}dinger operators on $\bbR^2$. Notably, the latter authors were able to treat periodic background. 
While it seems likely that the proof of our main theorems could be adjusted to treat superexponential decay in the $x$-direction, more mild short-range decay seems out of reach.
We also mention the work of Korotyaev-Saburova \cite{korotyaev2017schrodinger}, who studied analogs of strip periodic potentials on more general graphs. For these models, they obtained certain estimates on the locations of the bands.\par 
Our work also fits into the large program of establishing ballistic transport for Schr\"{o}dinger operators with continuous spectrum.
The strong form of ballistic transport defined here is only known for certain classes of operators beyond the periodic regime; results are known for certain limit-periodic operators on $\bbZ$ \cite{Fillman}, and on $\bbR$ \cite{young2021ballistic}. In many other settings, the accessible notions of transport are strictly weaker. We refer the reader to \cite{damanik2010general,damanik2024ballistic} for further background on this and other notions of transport, as well as further works in the one-dimensional, almost periodic setting \cite{KachkovskiyGe, Kachkovskiy, ZhaoZhang, Zhao, Zhaocont}. \par
Besides the aforementioned work of Asch-Knauf and the result of Fillman \cite{Fillman2021}, little is known in multi-dimensional settings, even for weaker forms of ballistic transport. Two results in that vein are the work of Karpeshina et al. \cite{Stolzetall,karpeshina2021ballistic} where ballistic lower bounds for the Abel mean of the
position operator are shown for certain quasi-periodic and limit-periodic operators on $\bbR^2$, and for generic quasi-periodic operators on $\bbR^d$, for $d\geq 2$, respectively, cf. \cite{damanik2024ballistic} and the discussion of Abel average norm-growth ballistic transport. Thus, our work contributes to the understanding of transport beyond one spatial dimension.
\subsection{Overview of the proofs}\label{overviewSection}
The main obstacle in establishing the results \cite{filonov,filonovKlopp,hoang2014absence} is the possibility of eigenvalues embedded in the essential spectrum of the fibered operator $H(k)$. This difficulty is unique to the partially periodic setting because $H(k)$ acts on a cylinder (as opposed to on a torus in the fully periodic setting) and therefore does not have compact resolvent. 
Thus, one needs to understand the variation of these eigenvalues in $k$, and the perturbation theory of embedded eigenvalues is typically quite challenging and must be treated in a context-dependent fashion. The strategy adopted by the above authors is to analytically continue the resolvent on some weighted space from the upper half-plane up to or across the real axis and then to study how this operator behaves as $k$ changes. Per our understanding, these results do not show that the eigenvalues of the fiber operators are analytic functions of $k$ due to the possibility of resonances, i.e., poles of the resolvent that are not eigenvalues. 
The transport properties of fully periodic operators are typically established by showing that the energies are differentiable and non-constant in the quasimomentum, $k$, almost everywhere, see \cite{AschKnauf,Fillman2021}. Thus, we require stronger control on the variation of these embedded energies than has previously been obtained in the works mentioned above.

The advantage of working on $\bbZ^2$ is that the Floquet transform reduces the system to an analytic family of $L$ coupled discrete Schr\"{o}dinger operators, each with a compactly supported potential. Thus, we may examine the eigenvalue problem via transfer matrices. For any fixed $k$, as $x\to \pm\infty$, the space of decaying solutions is finite-dimensional, so we can reduce the existence of an eigenvalue of $H(k)$ to an analytically varying connection problem between these two subspaces across the support of $V$. By forming the determinant associated with this connection problem, we are able to construct the ``partial Bloch variety" of $H$. The theory of analytic varieties allows us to conclude that each eigenvalue may be taken to be analytic almost everywhere (as a function of $k$). This analysis is more complicated than the perturbation theory of matrices because the variables $E$ and $k$ enter this determinant nonlinearly. For this purpose, in the periodic setting, Wilcox \cite{wilcox1978theory} used deep results of Cartan \cite{cartan} and Whitney-Bruhat \cite{WB} on the structure of real analytic varieties, but to keep our work relatively self-contained we develop the necessary machinery ourselves, see in particular, Lemma \ref{analyticLem} and Appendix \ref{WeierstrassPreparation}.
We caution that while this heuristic does suggest that these systems are ``essentially one-dimensional," we emphasize that the usual techniques used in the analysis of Schr\"{o}dinger operators on the line or on a strip $\bbZ\times \{1,\cdots, L\}$ do not apply in any straightforward way. Indeed, the coupling between different vertical slices makes the problem genuinely multi-dimensional, and the unboundedness in the $x$ direction creates the possibility of embedded eigenvalues.

Finally, we mention that this transfer matrix argument applies only on $\bbZ^2$, but we attain analogous transport results for the surface states on $\bbR^d$, $d\geq 3$, in a forthcoming work with Kuchment~\cite{BDKMY}. There, the role of the transfer matrices is played by the Dirichlet-to-Neumann map, which is indeed Fredholm.
\subsection*{Outline of the paper }
This paper is arranged as follows:\par
Section \ref{existenceSection} gives the necessary background on the Floquet transform, as well as some basic scattering results used in later sections. 

In Section \ref{embeddedSection}, we study the analytic variation of the surface states. We begin by studying the spectral properties of the free problem. Then, we use these results to reformulate the full problem using the transfer matrices. Finally, we use this new formulation to prove Theorem \ref{thm:analyticity}. 
In Section \ref{generalTransportSection} we show that Theorem \ref{thm:analyticity} implies Theorem \ref{thm:Z2ballistic} about the ballistic transport of the surface states.

In the next section, Section \ref{scatteringSection}, we turn to the scattering states, and show that a dense set also exhibits ballistic transport. This relies on some more general results on scattering in this setting given in Appendix~\ref{scatteringAppendix}.

In Section \ref{Domains}, we show that $D(\vec{Q})$ is indeed dense in the surface subspace. 

Finally, In Appendix \ref{WeierstrassPreparation}, we provide a form of the Weierstrass preparation theorem for real analytic functions, which is then used to prove the analytic variation of the embedded eigenvalues.

\subsection*{Acknowledgement} We thank Wilhelm Schlag for many helpful discussions about this work. We also thank David Weld for pointing us towards some relevant parts of the photonics literature. 
\section{Preliminaries}\label{existenceSection}
Let $\calB:=[0,2\pi/L)$ be the Brillouin zone. 
For quasimomentum $ k\in \calB $ and $(x,y)\in \bbZ\times\bbZ_L$, we define the \emph{partial Floquet transform} as
\begin{align*}
	(\calU\psi)(k,x,y)=\sum_{m\in \bbZ}^{} \psi(x,y+mL)e^{-ik(y+mL)}.
\end{align*}
Here $\bbZ_L$ may be regarded either as the set $\{0,1,\ldots,L-1\}$ or the integers mod $L$ since all formulas will be invariant mod $L$.
\par
It will be convenient to regard $\ell^2(\bbZ\times \bbZ_L)$ as $\ell^2(\bbZ;\bbC^L)$, where we notate a vector in this space as $\vec{\psi}_x$ for $x\in \bbZ$. 
We will often suppress the $x$ and $y$ variables in $(\calU\psi)(k,x,y)$ to indicate an element of $\ell^2(\bbZ;\bbC^L)$.

For $\{\vec{\psi}_x\}_{x\in \bbZ}$ a sequence in $\ell^2(\bbZ;\bbC^L)$, we write
\begin{align*}
     (H(k)\vec{\psi})_x= \vec{\psi}_{x+1}+\vec{\psi}_{x-1}+\Delta^k\vec{\psi}_x+V_x\vec{\psi}_x,
\end{align*}
where $ \Delta^k $ and $ V_x $ are the linear maps on $ \bbC^L $ given by
\begin{align*}
	&(\Delta^k\vec{\psi}_x)(j)=e^{ik}\vec{\psi}_x(j+1)+e^{-ik}\vec{\psi}_x(j-1)\\
	&(V_x\vec{\psi}_x)(j)=V(x,j)\vec{\psi}_x(j).
\end{align*}
Similarly, we define $H_0(k)$ via
\begin{align*}
    (H_0(k))\vec{\psi}_x=\vec{\psi}_{x+1}+\vec{\psi}_{x-1}+\Delta^k\vec{\psi}_x.
\end{align*}

The standard properties of the Floquet transform, as recorded, for instance, in Section 4 of \cite{Kuchment}, extend readily to the partial Floquet transform:
\begin{proposition}\label{floquetPr}
    The partial Floquet transform $\calU$ has the following properties:
    \begin{enumerate}
        \item $\calU$ is a unitary map
        \begin{align*}
            \calU:\ell^2(\bbZ^2)\rightarrow \int\limits_{\calB}^\oplus \ell^2(\bbZ;\bbC^L)\:\frac{dk}{|\calB|}.
        \end{align*}
        \item We have the unitary equivalence
    	\begin{align*}
    		\calU H \calU^* =\int\limits_{\calB}^\oplus H(k)\:\frac{dk}{|\calB|}.
    	\end{align*}
    \end{enumerate}
\end{proposition}

Let $\Omega:\ell^2(\bbZ^2)\rightarrow\ell^2(\bbZ^2)$ be the wave operator defined by the expression
\begin{align*}
    \Omega =\slim_{t\rightarrow\infty}e^{itH}e^{-itH_0},
\end{align*}
for $\slim$ the strong limit.
We recall that
\begin{align*}
    \calH_{\mathrm{sur}}= \int\limits_{\calB}^\oplus\calH_\textrm{pp}(H(k))\:\frac{ dk}{\abs{\calB} }.
\end{align*}
For later use, we record the following asymptotic completeness result whose content is basically the proof of \cite[Thm 1.8]{frank2003scattering}.
\begin{proposition}\label{Completeness}
    For $H$ strip periodic, we have that $\Omega\psi$ exists for all $\psi\in \ell^2(\bbZ^2)$. In addition, 
    \begin{align*}
        \ell^2(\bbZ^2)=\calH_{\mathrm{sur}}\oplus \Ran (\Omega),
    \end{align*}
    where $H$ has purely absolutely continuous spectrum on $\Ran \Omega$.
\end{proposition}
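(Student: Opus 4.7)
The plan is to reduce the claim to a fiber-wise analysis via the partial Floquet transform $U$ of Proposition~\ref{floquetPr}, which conjugates $H$ and $H_0$ to $\int^\oplus H(k)\,dk/|\bbT^*|$ and $\int^\oplus H_0(k)\,dk/|\bbT^*|$ respectively. The crucial observation is that although $V$ is not short-range on $\bbR^{n+m}$, on each fiber $V$ acts on the cylinder $W = \bbR^n\times\bbT^*$ as a bounded, \emph{compactly supported} multiplication operator, placing the problem squarely in the setting of classical short-range scattering theory.

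First, I would establish fiber-wise existence and completeness of $\Omega(k) = \slim_{t\to\infty} e^{itH(k)}e^{-itH_0(k)}$. Decomposing along the Fourier basis of $\bbT^*$, the resolvent $(H_0(k)+i)^{-1}$ becomes a direct sum of resolvents of $-\Delta_x + |k+\ell|^2$ on $L^2(\bbR^n)$, indexed by $\ell\in\Gamma^*$. Standard Hilbert--Schmidt estimates for $\chi_{B_R}(-\Delta + z)^{-M}$ on $\bbR^n$, together with the $|k+\ell|^{-2M}$ decay across transverse modes, show that $V(H_0(k)+i)^{-M}$ is trace class on $L^2(W)$ for $M$ large enough. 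Iterating the resolvent identity then makes $(H(k)-z_0)^{-M} - (H_0(k)-z_0)^{-M}$ trace class, so the Birman--Kato--Rosenblum theorem yields the existence of $\Omega(k)$ together with the identification $\Ran\Omega(k) = \calH_{\mathrm{ac}}(H(k))$.

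Next, I would rule out singular continuous spectrum fiber-wise. The limiting absorption principle for $H_0(k)$ on the cylinder follows directly from the LAP for $-\Delta_x + c$ on $\bbR^n$; combining this with standard resolvent-identity arguments exploiting the compact support of $V$ gives the LAP for $H(k)$ off the discrete set of thresholds $\{|k+\ell|^2\}_{\ell\in\Gamma^*}$ and embedded eigenvalues. Hence $\calH_{\mathrm{sc}}(H(k))=0$ and $\calH_{\mathrm{ac}}(H(k)) = \calH_{\mathrm{pp}}(H(k))^\perp$.

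Fiber-integrating, $\Omega = \int^\oplus \Omega(k)\,dk/|\bbT^*|$ exists on all of $\calH$, and
\begin{align*}
\Ran\Omega \;=\; \int\limits_{\bbT^*}^\oplus \calH_{\mathrm{pp}}(H(k))^\perp\,\frac{dk}{|\bbT^*|} \;=\; \biggl(\int\limits_{\bbT^*}^\oplus \calH_{\mathrm{pp}}(H(k))\,\frac{dk}{|\bbT^*|}\biggr)^{\!\!\perp},
\end{align*}
which gives the claimed orthogonal decomposition. The intertwining $\Omega e^{-itH_0} = e^{-itH}\Omega$, together with the purely absolutely continuous spectrum of $H_0 = -\Delta$, forces $H|_{\Ran\Omega}$ to have purely absolutely continuous spectrum. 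I expect the principal technical obstacle to lie in step three: verifying the limiting absorption principle uniformly in the countable family of thresholds produced by the cylindrical geometry. The compact support of $V$ makes standard Agmon--Kato--Kuroda methods available, but care is needed to piece together the LAP across transverse modes in a way that survives the fiber integration.
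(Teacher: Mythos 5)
Your overall architecture is exactly the paper's: pass to the fibers via the partial Floquet transform, establish existence and completeness of $\Omega(k)$ on each fiber, write $\Omega=\int_{\bbT^*}^\oplus\Omega(k)\,\frac{dk}{|\bbT^*|}$, and integrate. The difference is entirely in how the fiber-wise statement is obtained. The paper does no analysis here at all: in the continuum it cites Theorem 1.1(c) of Filonov's cylinder paper \cite{filonovCylinder} for the existence of $\Omega(k)$ together with the decomposition $L^2(W)=\calH_{\mathrm{pp}}(k)\oplus\Ran\Omega(k)$, in the discrete case it observes that $H(k)-H_0(k)$ is finite rank and invokes Kato--Rosenblum, and for the direct-integral identity for $\Omega$ it cites the proof of Theorem 6.3 of \cite{black2022scattering}. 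You instead propose to prove the fiber-wise input from scratch (trace-class estimates on the cylinder plus an Agmon--Kato--Kuroda limiting absorption principle), which is essentially a reconstruction of the content of \cite{filonovCylinder}. To your credit, you correctly identify the one point that is easy to miss: trace-class scattering theory only yields $\Ran\Omega(k)=\calH_{\mathrm{ac}}(H(k))$, so the stated decomposition additionally requires $\calH_{\mathrm{sc}}(H(k))=\{0\}$, and your LAP step is genuinely load-bearing for that. The exceptional set there is handled by analytic Fredholm theory on each interval between consecutive thresholds $\|k+\ell\|^2$ (where $V(H_0(k)-\lambda-i0)^{-1}$ is compact and continuous on weighted spaces), so the countable family of thresholds you worry about is only a countable closed exceptional set and does not obstruct the absence of singular continuous spectrum; embedded eigenvalues are permitted and simply land in $\calH_{\mathrm{pp}}(k)$. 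Two small gaps to fill if you carry this out: (i) the identity $\Omega=\int^\oplus\Omega(k)$ needs a word (pointwise-in-$k$ strong convergence plus the uniform bound $\|e^{itH(k)}e^{-itH_0(k)}\psi(k)\|\leq\|\psi(k)\|$ and dominated convergence), and (ii) your argument is written only for the continuum, whereas the proposition also covers $\ell^2(\bbZ^{n+m})$; there the trace-class step is trivial ($V$ is finite rank on each fiber) but the absence of singular continuous spectrum still requires a (simpler, determinant-based) version of your step three.
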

\begin{proof}
    For each $k\in\calB$, define the wave operator
    \begin{align*}
        \Omega(k)=\slim_{t\rightarrow\infty} e^{itH(k)}e^{-itH_0(k)}
    \end{align*}
    on $\ell^2(\bbZ;\bbC^L)$. The difference between $H(k)$ and $H_0(k)$ is finite rank so the existence and completeness of the wave operators $\Omega(k)$ is an immediate consequence of the Kato-Rosenblum Theorem \cite[Theorem XI.8]{RSVol3}, which requires the difference to be only trace class. Therefore, for each $k\in\calB$,
    \begin{align*}
        \ell^2(\bbZ;\bbC^L)=\calH_{\mathrm{pp}}(H(k))\oplus \Ran \Omega(k).
    \end{align*}

    From \cite[Theorem XIII.85]{RSVol4}, 
    \begin{align*}
        \calU e^{-itH}\calU^*=\int\limits_{\calB}^\oplus e^{-itH(k)}\:\frac{dk}{|\calB|},
    \end{align*}
    and similarly for $e^{-itH_0}$.
    Now observe that for any $\psi\in \ell^2(\bbZ^2)$,
    \begin{align*}
        &\|e^{itH}e^{-itH_0}\psi-\calU^* \int_\calB^\oplus \Omega(k)(\calU\psi)(k)\:\frac{dk}{|\calB|} \|_{\ell^2(\bbZ^2)}^2=\\
        &\quad\int_\calB \|(e^{itH(k)}e^{-itH_0(k)}- \Omega(k))(\calU\psi)(k)\|^2_{\ell^2(\bbZ;\bbC^L)}\:\frac{dk}{|\calB|}.
    \end{align*}
    It follows from the dominated convergence theorem that $\Omega \psi$ exists and is equal to
    \begin{align*}
        \int\limits_{\calB}^\oplus \Omega(k)\:\frac{dk}{\abs{\calB}},
    \end{align*}
    so we may conclude.
\end{proof}

\section{Analytic variation of the surface states}\label{embeddedSection}
\subsection{Spectral theory of $H_0(k)$}\label{FreeSpec}
We start by recording some information about $H_0(k)=\Delta ^k+\Delta^x$, which will be useful in the analysis of $H(k)$.
\begin{proposition} 
    Let $ v_n\in \bbC^{L} $ be the vector $ v_n(j)=\frac{1}{\sqrt{L} }\zeta^{jn} $ for $ \zeta=e^{2\pi i/L} $. Then $ \{v_n\}_{n\in \bbZ_L} $ is an orthonormal eigenbasis of $ \Delta^k $ with associated eigenvalues $ 2\cos(k+2\pi n/L) $.
\end{proposition}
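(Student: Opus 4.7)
The statement is essentially the standard diagonalization of the circulant operator $\Delta^k$ via discrete Fourier analysis, so the proof will be a direct computation. I would split it into three short steps.

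First, I would verify orthonormality by a geometric series calculation:
\begin{align*}
    \langle v_n, v_m\rangle = \frac{1}{L}\sum_{j=0}^{L-1}\overline{\zeta^{jn}}\,\zeta^{jm} = \frac{1}{L}\sum_{j=0}^{L-1}\zeta^{j(m-n)},
\end{align*}
which equals $\delta_{nm}$ since $\zeta$ is a primitive $L$th root of unity (so the sum vanishes unless $m\equiv n \pmod L$). Since $|\bbZ_L|=L=\dim\bbC^L$, an orthonormal family of this cardinality is automatically a basis, giving the "basis" half of the claim for free.

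Second, I would check the eigenvalue equation directly. Using $v_n(j+1)=\zeta^n v_n(j)$ and $v_n(j-1)=\zeta^{-n}v_n(j)$ (which is valid on $\bbZ_L$ because $\zeta^{Ln}=1$, so the natural identification of $\bbZ_L$ with $\{0,\ldots,L-1\}$ or with $\bbZ/L\bbZ$ gives the same answer), I compute
\begin{align*}
    (\Delta^k v_n)(j) &= e^{ik}v_n(j+1)+e^{-ik}v_n(j-1) \\
    &= \bigl(e^{ik}\zeta^n+e^{-ik}\zeta^{-n}\bigr)v_n(j) \\
    &= \bigl(e^{i(k+2\pi n/L)}+e^{-i(k+2\pi n/L)}\bigr)v_n(j) \\
    &= 2\cos\!\left(k+\tfrac{2\pi n}{L}\right)v_n(j).
\end{align*}

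There is no real obstacle; the only thing to be slightly careful about is that $\Delta^k$ is defined on $\bbC^{\bbZ_L}$ with periodic shift, so the boundary cases $j=0$ and $j=L-1$ are handled by the identification $\zeta^{Ln}=1$ rather than by any separate argument. Since all three claims (orthogonality, normalization, eigenvalue equation) have now been verified, the proof is complete.
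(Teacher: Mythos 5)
Your proof is correct, and for the ``basis'' half of the claim it takes a different (and arguably tidier) route than the paper. The eigenvalue computation is essentially identical in both arguments. Where you differ is in establishing that $\{v_n\}_{n\in\bbZ_L}$ is a basis: the paper observes that linear independence is not automatic from the eigenvector computation (since distinct $n$ can yield equal eigenvalues $2\cos(k+2\pi n/L)$) and resolves this by computing the Vandermonde determinant $\det(v_1\cdots v_L)=L^{-L/2}\prod_{m<\ell}(\zeta^m-\zeta^\ell)\neq 0$. You instead verify orthonormality directly via the geometric series $\frac{1}{L}\sum_{j=0}^{L-1}\zeta^{j(m-n)}=\delta_{nm}$ and conclude the basis property from the dimension count. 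Your approach has the advantage of actually proving the orthonormality asserted in the statement, which the paper's Vandermonde argument does not explicitly address (it only yields linear independence); the paper's approach, for its part, makes explicit the subtlety about coinciding eigenvalues that motivates needing an independence argument at all. Both are complete and correct.
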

\begin{proof}
    We compute
    \begin{align*}
        (\Delta^kv_n)(j)=&(1/\sqrt{L} )(e^{i(k+2(j+1)n\pi/L)} +e^{-i(k-2(j-1)n\pi/L)})=2\cos(k+2\pi n/L)\zeta^{jn}/\sqrt{L} 
    \end{align*}
    which shows that $ v_n $ is an eigenvector, as claimed. The fact that these vectors are linearly independent is not immediate because two of the expressions for the eigenvalues, $ 2\cos(k+2\pi n/L) $, may be equal. However, it follows from the non-singularity of the Vandermonde matrix. Indeed, we may compute
    \begin{align*}
        \det(v_1\cdots v_L)=\frac{1}{L^{L/2}}\prod_{1\leq m<\ell<L}(\zeta^m-\zeta^\ell)\ne 0
    \end{align*}
    to conclude.
\end{proof}

Forming the vector $ \Psi_x=\vec{\psi}_x\oplus\vec{\psi}_{x-1}$ in $ \bbC^L\oplus\bbC^L $, the equation $H_0(k)\psi=E\psi$ may be written via transfer matrices as
\begin{align*}
    \Psi_{x+1}=T_0(E,k)\Psi_x,
\end{align*}
where 
\begin{align*}
    T_0(E,k)\Psi_x=\left( (E-\Delta^k)\vec{\psi}_x-\vec{\psi}_{x-1} \right) \oplus\vec{\psi}_x,
\end{align*}
or as a block matrix:
\begin{align*}
    &T_0(E,k)=\begin{pmatrix}
        E-\Delta^k & -\Id \\
        \Id & 0
    \end{pmatrix}.
\end{align*}
Observe that if $\vec{\psi}_x=a_x v_j$ for all $x\in\bbZ$, then the vector-valued difference equation $H_0(k)\psi=E\psi$ reduces to the scalar difference equation
\begin{align*} 
    a_{x+1}=e_j a_x-a_{x-1},
\end{align*}
where $e_j =E- 2\cos(k+2\pi j/L) $.\par
Thus, decomposing $T_0$ with respect to the subspaces $\calV_j=\text{span}\{v_j\oplus 0,0\oplus v_j\}\subset \bbC^L\oplus \bbC^L$ we see that, up to unitary conjugation, $T_0(E,k)$ is given by
\begin{align}\label{T0Decomp}\bigoplus_{j\in\bbZ_L}\begin{pmatrix}
        e_j & -1 \\
        1 & 0
    \end{pmatrix}.
\end{align}
    
Each summand is a 1d transfer matrix of the form
\begin{align*}
    \begin{pmatrix}
        w & -1 \\
        1 & 0
    \end{pmatrix},
\end{align*}
whose eigenvalues are parameterized by the \emph{Joukowsky map} $J:\overline{\mathbb C}\to \overline{\mathbb C}$, which we now recall. It is defined by $J(z)=z+\frac{1}{z}$ and admits analytic inverses $\mu^-:\bbC\setminus [-2,2]\rightarrow \bbC\setminus \overline{\bbD}$ and $\mu^+:\bbC\setminus[-2,2]\rightarrow \bbD$ where $\bbD$ is the unit disc. By solving $J(z)=w$, we may write these functions explicitly as
\begin{align*}
    \mu^\pm(w)=\frac{w\mp\sqrt{w^2-4}}{2}.
\end{align*}
The root is unambiguous because $\sqrt{z^2-4}$ has two analytic branches on $\bbC\setminus[-2,2]$ with images lying in either $\bbD$ or $\bbC\setminus \overline{\bbD}$. When $w\in [-2,2]$, we will also write $\mu^\pm(w)$ to mean the two (not necessarily unique) solutions of $J(z)=w$, but for these $w$'s all claims will be symmetric in $+$ and $-$ so we do not fix a convention. In this case, $\mu^\pm(w)$ both lie on $\partial\bbD$.\par
The eigenvector associated to $\mu^\pm(w)$ is given by $\begin{pmatrix}
    \mu^{\pm}(w)\\1
\end{pmatrix}$. Clearly then, for $w\not\in[-2,2]$, an initial condition for the difference equation corresponding to the 1d transfer matrix is decaying at $+\infty$ if and only if it is in the eigenspace of $\mu^+(w)$ and similarly at $-\infty$. On the other hand, when $w\in[-2,2]$, there are no such solutions in either direction.\par
With this in mind, we associate to any $(E,k)\in\bbR^2$ the subspaces $\calV^\pm(E,k)\subset \bbC^L\oplus \bbC^L$ of vectors which decay at $\pm\infty$ under repeated application of $T_0$. If $j\in\bbZ_L$ is such that $e_j(E,k)\not\in[-2,2]$, then let $\calV_j^\pm(E,k)\subset \bbC^{L}\oplus \bbC^{L}$ be the eigenspaces corresponding to $\mu^\pm$. The decomposition (\ref{T0Decomp}) and the above analysis show that $\calV^\pm$ is given by
\begin{align*}
    \calV^\pm(E,k)=\bigoplus_{\{j\in\bbZ_L\mid e_j(E,k)\not\in[-2,2]\}}\calV_j^\pm(E,k).
\end{align*}
Each subspace $\calV_j^\pm$ depends on $E$ and $k$ through the quantity $\mu^\pm(e_j(E,k))$, where we recall that  $\mu^\pm$ is an analytic function on $\bbR\setminus [-2,2]$. Therefore, each subspace $\calV_j^\pm$ varies analytically in $E$ and $k$ (in the sense that the associated projector is an analytic operator) inside the open set $\{(E,k)\in \bbR^2\mid e_j(E,k)\not\in [-2,2]\}$. \par
We note that the indices summed to obtain $\calV^\pm$ only change when for some $j\in \bbZ_L$ we have $e_j=\pm 2$. It follows that the total subspaces $\calV^\pm$ each vary analytically away from the curves given by $e_j(E,k)=2$ and $e_j(E,k)=-2$, for each $j\in \bbZ_L$, across which the dimension of $\calV^\pm$ may jump. Let $\calA$ be the union of these curves, i.e.
\begin{align}\label{curves}
    \calA=\bigcup_{j\in\bbZ_L}\{(E,k)\in\bbR\times \calB\mid e_j(E,k)= 2\}\cup\{(E,k)\in\bbR^2\mid e_j(E,k)= -2\},
\end{align}
and let the open set $\calA^c$ be their complement inside $\bbR\times\calB$. 

Finally, for $(E,k)\in\bbR\times \calB$ let $\calI^\pm(E,k):\calV^\pm\hookrightarrow \bbC^L\oplus\bbC^L$ be the inclusion of $\calV^\pm$ in the full space and $\calP^\pm(E,k):\bbC^L\oplus\bbC^L\rightarrow\calV^\pm\subset \bbC^L\oplus\bbC^L$ be the corresponding orthogonal projection.\par
In summary, we have proven the following proposition: 

\begin{proposition}\label{T0Summary}
	In the above notation: 
 \begin{itemize}
     \item The eigenvalues of $T_0(E,k)$ are given by $\{\mu^\pm(e_j(E,k)\mid j\in\bbZ_L\}$.
     \item A vector $\psi_0\in\bbC^L\oplus \bbC^L$ satisfies $\{T_0(E,k)^{\pm n}\psi_0\}_{n=0}^\infty\in\ell^2(\bbN)$
     if and only \\
     if $\psi_0\in\Ran\calP^\pm(E,k)$. 
     \item The operators $\calI^\pm(E,k)$ and $\calP^\pm(E,k)$ are analytic in $\calA^c\subset\bbR\times \calB$.
 \end{itemize}
 \end{proposition}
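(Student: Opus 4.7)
The plan is to handle the three bullets in order; each distills a discussion already in place above the statement. For the first bullet, the block decomposition \eqref{T0Decomp} shows that $T_0(E,k)$ is unitarily equivalent to $\bigoplus_{j\in\bbZ_L}\begin{pmatrix} e_j & -1 \\ 1 & 0 \end{pmatrix}$, whose characteristic polynomial on the $j$-th block is $\lambda^2-e_j\lambda+1$. The roots are precisely the solutions of $J(\lambda)=e_j$, namely $\mu^\pm(e_j)$, so the spectrum of $T_0(E,k)$ is $\{\mu^\pm(e_j):j\in\bbZ_L\}$.

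For the second bullet, I would decompose $\psi_0=\sum_{j\in\bbZ_L}\psi_0^j$ with $\psi_0^j\in V_j$. Because each $V_j$ is $T_0$-invariant, the condition $\{T_0^n\psi_0\}_{n\geq 0}\in\ell^2(\bbN)$ decouples over $j$, so I analyze each two-dimensional block separately. A case split then finishes the argument: if $e_j\notin[-2,2]$, then $|\mu^+(e_j)|<1<|\mu^-(e_j)|$ and $\ell^2$-summability forces $\psi_0^j$ to lie in the $\mu^+$-eigenspace, i.e.\ in $V_j^+$; if $e_j\in(-2,2)$, the two eigenvalues are distinct points of $\partial\bbD$, so every nonzero orbit is quasi-periodic; and if $e_j=\pm2$, the block has a nontrivial Jordan cell and the orbits grow linearly. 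Hence the $\ell^2$ condition is equivalent to $\psi_0\in\bigoplus_{e_j\notin[-2,2]}V_j^+=\Ran\calP^+$, and the argument for the backward orbit is identical.

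For the third bullet, I would fix a connected component of $\calU$; on it the set $S:=\{j:e_j(E,k)\notin[-2,2]\}$ is constant, precisely because $\calA$ is the union of the loci where some $e_j$ crosses $\pm 2$, and so $e_j$ cannot enter or leave $[-2,2]$ without first meeting $\calA$. For each $j\in S$, the explicit formula $\mu^\pm(w)=(w\mp\sqrt{w^2-4})/2$ extends to a real-analytic function on a neighborhood of $\bbR\setminus[-2,2]$, and composition with the real-analytic function $e_j(E,k)=E-2\cos(k+2\pi j/L)$ yields real-analytic eigenvalues. The spanning vectors of $V_j^\pm$, namely $v_j$ tensored against $(\mu^\pm(e_j),1)^\top$, then vary analytically, so the orthogonal projector onto $V_j^\pm$ is an analytic operator. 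Summing over the finite set $S$ gives the analyticity of $\calP^\pm$, and hence of $\calI^\pm$. The only delicate ingredient I anticipate is the local constancy of $S$ on $\calU$; this is what makes it essential to excise the full curves $\calA$ in the definition of $\calU$ rather than only the locus where $e_j\in(-2,2)$.
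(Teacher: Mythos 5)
Your proposal is correct and takes essentially the same approach as the paper: Proposition~\ref{T0Summary} is stated there as a summary of the discussion immediately preceding it, and your three bullets flesh out the same ingredients --- the block diagonalization \eqref{T0Decomp} giving the eigenvalues as roots of $\lambda^2-e_j\lambda+1$, the modulus case split $|\mu^+(e_j)|<1<|\mu^-(e_j)|$ versus $e_j\in[-2,2]$ for the $\ell^2$ characterization, and the analyticity of the branches $\mu^\pm$ on $\bbC\setminus[-2,2]$ composed with $e_j(E,k)$ for the third bullet. Your observation that $\{j:e_j(E,k)\notin[-2,2]\}$ is locally constant on each component of $\calU$ is precisely the paper's remark that $\calV^\pm$ can only change rank across the curves in $\calA$.
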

    
\subsection{Eigenvalue problem for $H(k)$}\label{Reformulation}
Now we turn our attention to the eigenvalue problem for the full operator $H(k)$ for some fixed $k\in\calB$. Using the transfer matrix formalism developed above, we will reduce the eigenvalue problem to a connection problem across the support of the potential,  as detailed in Lemma \ref{mainLemma} below.\par 
First, write $H(k)\psi=E\psi$ as the vector-valued difference equation
\begin{align}\label{diffEq}
	\vec{\psi}_{x+1}=(E-\Delta^k-V_x)\vec{\psi}_x-\vec{\psi}_{x-1},
\end{align}
where $V_x=V(x,\cdot)$ is as described above. As above, using $ \Psi_x=\vec{\psi}_x\oplus\vec{\psi}_{x-1}$ in $ \bbC^{L}\oplus \bbC^{L} $, we can write (\ref{diffEq}) as
\begin{align*}
	\Psi_{x+1}=\begin{pmatrix}
	    E-\Delta^k-V_x&-\Id\\
     \Id & 0
	\end{pmatrix}\Psi_x.
\end{align*}
Now let 
\begin{align}\label{TVFactor}
    T_V=\begin{pmatrix}
	    E-\Delta^k-V_{R}&-\Id\\
     \Id & 0
	\end{pmatrix}\begin{pmatrix}
	    E-\Delta^k-V_{R-1}&-\Id\\
     \Id & 0
	\end{pmatrix}\cdots\begin{pmatrix}
	    E-\Delta^k-V_{-R}&-\Id\\
     \Id & 0
	\end{pmatrix}
\end{align}
be the transfer matrix from the left side of the support of $V$ to the right. \par
With this in hand, we arrive at the key lemma:
\begin{lemma}\label{mainLemma}
    Let $\tilde{\calP}^+=\Id - \calP^+$. Then $E\in\bbC$ is an eigenvalue of $H(k)$ if and only if the matrix
    \begin{align*}
        A(E,k)=(\tilde{P}^+T_V\calI^-)(E,k)
    \end{align*}
    has nontrivial kernel. Furthermore, the eigenvalues of $H(k)$ are given by the zeroes of 
    \begin{align*}
        F(\cdot,k)=\det(A^*A(\cdot,k))
    \end{align*}
    counted with multiplicity.
\end{lemma}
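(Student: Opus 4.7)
The plan is to reduce the eigenvalue problem $H(k)\psi=E\psi$, posed on the infinite-dimensional space $\ell^2(\bbZ;\bbC^L)$, to a finite-dimensional matching condition at the edges of $\supp V$, using the transfer matrix formalism of the previous subsection.

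First I would observe that any putative eigenvector $\psi\in\ell^2$, when packaged into $\Psi_x=\psi_x\oplus\psi_{x-1}$, satisfies $\Psi_{x+1}=T_0(E,k)\Psi_x$ on both half-lines $\{x>R\}$ and $\{x<-R\}$, together with the bridging relation $\Psi_{R+1}=T_V(E,k)\Psi_{-R}$ coming from \eqref{TVFactor}. Since $\|\Psi_x\|^2=\|\psi_x\|^2+\|\psi_{x-1}\|^2$, $\ell^2$ summability of $\psi$ is equivalent to $\ell^2$ summability of $\{\Psi_x\}_{x\in\bbZ}$, which decouples into the two half-line conditions that $\{T_0(E,k)^n\Psi_{R+1}\}_{n\geq 0}$ and $\{T_0(E,k)^{-n}\Psi_{-R}\}_{n\geq 0}$ both lie in $\ell^2$. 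By the second bullet of Proposition \ref{T0Summary}, these are precisely $\Psi_{R+1}\in\calV^+(E,k)$ and $\Psi_{-R}\in\calV^-(E,k)$. Because the recursion determines $\psi$ from $\Psi_{-R}$ (with $T_0$ invertible of determinant one), this sets up a linear bijection between the $E$-eigenspace of $H(k)$ and the set $\{v\in\calV^-(E,k)\mid T_V(E,k)\calI^-(E,k)v\in\calV^+(E,k)\}$. Applying $\tilde{\calP}^+=\Id-\calP^+$ turns the containment into the equation $A(E,k)v=0$, giving the desired kernel characterization.

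For the determinant statement, $A^*A(E,k)$ is a self-adjoint endomorphism of the finite-dimensional space $\calV^-(E,k)$, so $F(E,k):=\det(A^*A(E,k))$ is well-defined, and the identity $\langle A^*Av,v\rangle=\|Av\|^2$ gives $\ker A^*A=\ker A$. Hence $F(E,k)=0$ precisely when $A(E,k)$ has nontrivial kernel, and thus precisely when $E$ is an eigenvalue. The multiplicity claim should be read as the equality $\dim\ker A(E,k)=\dim\ker A^*A(E,k)$, which is the multiplicity of $0$ as an eigenvalue of the self-adjoint matrix $A^*A(E,k)$; its compatibility with the analytic order of vanishing of $F(\cdot,k)$ at $E$ will be exploited in the analyticity analysis of Section \ref{AnalyticDisc}.

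The main obstacle is a mild dimension issue rather than a deep conceptual one: $A(E,k)$ maps $\calV^-(E,k)$ into the generally larger space $\Ran\tilde{\calP}^+(E,k)$, and $\dim\calV^-(E,k)$ and $2L-\dim\calV^+(E,k)$ need not coincide, so $A$ itself is generally not a square matrix. Passing to $A^*A$ sidesteps this cleanly, and combined with the analyticity of $\calI^\pm$ and $\calP^\pm$ on the open set $\calU$ guaranteed by Proposition \ref{T0Summary}, yields a scalar analytic function of $(E,k)$ whose zero set coincides with the set of fiber eigenvalues---exactly the object required to construct the partial Bloch variety described in Section \ref{overviewSection}.
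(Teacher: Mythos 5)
Your proposal is correct and follows essentially the same route as the paper's proof: use Proposition \ref{T0Summary} to characterize $\ell^2$ decay at $\pm\infty$ as membership in $\calV^\pm$, reduce the eigenvalue problem to the connection condition $T_V\calI^-v\in\Ran\calP^+$, and observe that $\det(A^*A)$ vanishes exactly when $A$ has nontrivial kernel (the paper phrases this as $F$ being the product of the singular values of $A$). Your write-up is more detailed than the paper's, in particular in spelling out the bijection between the eigenspace and $\ker A$ and in flagging the rectangularity of $A$, but no new idea is introduced.
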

\begin{proof}
    From Proposition \ref{T0Summary}, we see that a solution to $H(k)\psi=E\psi$ is $\ell^2$ at $-\infty$ if and only if $\psi_{-R}\in\Ran \calI^-$ and $\psi_R\in\Ran \calI^+$. Therefore, $E$ is an eigenvalue if and only if $T_V$ sends a vector in $\Ran \calI^-$ to a vector in $\Ran\calP^+$. This is precisely the condition that $A(E,k)$ is singular, since the projector $\tilde{P}^+$ enforces that the output vector has no non-decaying component. Since $F$ is the product of the singular values of $A$, the second claim follows as well.
\end{proof}
\subsection{Proof of Theorem~\ref{thm:analyticity}}\label{AnalyticDisc}
We require the following lemma on the analytic variation of eigenvalues of $H(k)$:
\begin{proposition}\label{analyticLem}
    Let $E_0\in \bbR$ be an eigenvalue of $H(k_0)$ for some $k_0\in\calB$, and suppose that $(E_0,k_0)\not\in \calA$. 
    Then there exists open intervals $I\subset \bbR$ containing $E_0$ and $J\subset \calB$ containing $k_0$ such that one of the following possibilities hold:
    \begin{enumerate}
        \item The point $(E_0,k_0)$ is an isolated point of the spectrum, i.e.  it is the only point $(E,k)\in I\times J$ such that $E_0$ is an eigenvalue of $H(k)$.
        \item There exists countably many intervals $J_i\subset J$ and analytic functions $f_i:J_i\rightarrow \bbR$ such that for some finite set $\calN$
        \begin{align*}
        \{(E,k)\in I\times (J\setminus \calN) \mid \ker(H(k)-E)\neq 0\}=\bigcup_i\{(f_i(k),k)\mid k\in I_i\}.
        \end{align*}
    \end{enumerate}
\end{proposition}

\begin{proof}
    First, note that $E\mapsto F(E,k_0)$ cannot vanish identically because then $H(k)$ would have an interval of eigenvalues. 
    Thus, we may apply Lemma \ref{WeierstrassPreparationLemma}  to find some neighborhood of $(E_0,k_0)$ of the form $I\times J$ on which the zero set of $F$ coincides with the zero set of a polynomial $g(E,k)$ with a discriminant $D(k)$ that is not identically $0$. 
    We have, by the definition of the discriminant, that
    \begin{align*}
    \left\{ k\in J:D(k)= 0\right\}=\left\{k\in J: \exists E\in \mathbb R\;\text{s.t.}\; \frac{\partial g}{\partial E}(E,k)=g(E,k)= 0\right\}.  
    \end{align*} 
    This set must be countable and accumulate at the endpoints of $J$ because $D(k)$ is analytic so we may take it to be finite by shrinking $J$ if necessary.
    If the only points $(E,k)\in I\times J$ such that $g(E,k)=0$  are for $k$ with  $D(k)=0$ then the first possibility in the conclusion holds.
    Otherwise, for any $k\in J$ for which $D(k)\neq 0$, we may apply the analytic implicit function theorem to conclude.
\end{proof}

Now, we establish the non-constancy of the energies:
 
\begin{lemma}\label{lem:nonconstancy}
    For any $E_0\in\bbR$, the set of $k\in\calB$ for which $E_0$ is an eigenvalue of $H(k)$ has measure $0$.
\end{lemma}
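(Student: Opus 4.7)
My plan is to derive a contradiction via a Thomas-style complexification of the fiber momentum $k$, applied to the holomorphic family $H(z)$.

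Assume for contradiction that $\Omega := \{k \in \bbT^* : F(E_0,k) = 0\}$ has positive Lebesgue measure. By Lemma~\ref{mainLemma}, $\Omega$ coincides with the set of $k$ for which $E_0 \in \sigma_{\mathrm{pp}}(H(k))$. Since $F(E_0, \cdot)$ is real-analytic on each connected component of $\bbT^* \setminus \calA_{E_0}$ (a cofinite open set), and since any positive-measure subset of $\bbR$ has accumulation points, the one-variable identity theorem for real-analytic functions forces $F(E_0, \cdot) \equiv 0$ on some open interval $I \subset \bbT^*$. The goal is then to rule out this constancy.

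I would extend $H(k)$ to the entire holomorphic family $H(z) = \Delta^x + \Delta^z + V$ for $z \in \bbC$, where $\Delta^z$ is the $L\times L$ matrix with entries obtained from $\Delta^k$ by replacing $e^{\pm ik}$ with $e^{\pm iz}$. The eigenvalues of $\Delta^z$ are $2\cos(z + 2\pi n/L)$, $n = 0,\ldots,L-1$. Choose $k_0 \in I$ avoiding the finite bad set $\{k : \sin(k + 2\pi n/L) = 0 \text{ for some } n\}$, and set $c := \min_n |\sin(k_0 + 2\pi n/L)| > 0$. For $z = k_0 + is$ with $s > 0$, the imaginary parts $-2\sin(k_0+2\pi n/L)\sinh(s)$ of these eigenvalues are uniformly bounded below by $2c\sinh(s)$ in absolute value, so $\sigma(H_0(z)) = [-2,2] + \sigma(\Delta^z)$ is contained in $L$ horizontal segments at imaginary heights $\geq 2c\sinh(s)$, giving $\|(H_0(z)-E_0)^{-1}\| \leq (2c\sinh s)^{-1}$. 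Since $V$ is a finite-rank (in particular bounded) operator on $\ell^2(\bbZ;\bbC^L)$, the Neumann series yields invertibility of $H(z) - E_0 = (I + V(H_0(z)-E_0)^{-1})(H_0(z)-E_0)$ for $s$ large; in particular $E_0 \notin \sigma(H(z))$ for $\Im z \gg 0$.

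To close the loop, I would consider the Birman-Schwinger (Fredholm) determinant
\begin{align*}
D(z) := \det\bigl(I + V(H_0(z) - E_0)^{-1}\bigr)
\end{align*}
on the open connected domain $\calD \subset \bbC_+$ where $H_0(z) - E_0$ is invertible (whose complement in $\bbC_+$ is a bounded union of segments on finitely many vertical lines). On $\calD$, $D$ is holomorphic and finite-rank, and $D(z) = 0$ exactly where $H(z) - E_0$ fails to be invertible. By the limiting absorption principle for the finite-rank, compactly-$x$-supported perturbation $V$, $D$ extends continuously from above to non-tangential boundary values $D(k+i0)$ on $I$ (away from thresholds), which vanish on all of $I$ since $E_0 \in \sigma_{\mathrm{pp}}(H(k))$ there. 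Combined with the Thomas bound $D(z) \neq 0$ for $\Im z \gg 0$, a boundary uniqueness theorem for holomorphic functions (Privalov/Luzin-Privalov, applied to a suitable Nevanlinna normalization of $D$) would force $D \equiv 0$ on $\calD$, contradicting the non-vanishing at large $\Im z$.

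The main obstacle is the last step: rigorously executing the boundary uniqueness argument for $D$. The resolvent norm can blow up like $(\Im z)^{-1}$ as $z \to I$, so $D$ itself is not bounded, and one must control its growth and place it in a function class (e.g.\ a Nevanlinna class) for which vanishing of boundary values on a positive-measure set does imply identical vanishing. An alternative plan more in keeping with the transfer-matrix framework of this section is to analytically extend $F(E_0, \cdot)$ itself to complex $z$ by continuing the projectors $\calP^\pm(E_0, z)$ off the branch cut $[-2,2]$ of the Joukowsky inverses $\mu^\pm$, matching boundary values across $I$ to the real object from Lemma~\ref{mainLemma}, and then applying the one-variable identity theorem on a connected complex domain that meets both a neighborhood of $I$ and the large-$\Im z$ region where $\tilde F(E_0, z) \neq 0$ by the Thomas bound.
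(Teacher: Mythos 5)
Your opening reduction is correct and matches the paper's: away from the finite set $\{k : (E_0,k)\in\calA\}$ the function $k\mapsto F(E_0,k)$ is real-analytic, so it suffices to show it is not identically zero on any interval $I$ with $\{E_0\}\times I\subset\calU$, and the natural way to do this is a Thomas-style complexification $k\mapsto k+it$. But your primary route through the Birman--Schwinger determinant $D(z)=\det(I+V(H_0(z)-E_0)^{-1})$ has a genuine gap beyond the Nevanlinna-class issue you flag. The step ``$D$ extends continuously to $D(k+i0)$ on $I$, which vanishes there since $E_0\in\sigma_{\mathrm{pp}}(H(k))$'' is precisely the unresolved point: for an eigenvalue \emph{embedded} in $\sigma_{\mathrm{ess}}(H(k))$ there is no general implication that the boundary value of the (weighted-space) Birman--Schwinger determinant vanishes, nor conversely that its boundary zeros are eigenvalues rather than resonances. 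This is exactly the resonance obstruction the paper identifies as the reason the Filonov--Klopp resolvent-continuation method does not control embedded eigenvalues, and it is why Section 4 abandons the resolvent in favor of the transfer-matrix object $A(E,k)=\tilde{\calP}^+T_V\calI^-$, whose singularity is \emph{equivalent} to $E$ being an eigenvalue (Lemma \ref{mainLemma}). So even granting a Privalov-type uniqueness theorem, the hypothesis you would feed it is not established.

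Your ``alternative plan'' in the last sentence is essentially the paper's proof, but the decisive step is missing. There is no boundary-matching to do: since $(E_0,k)\in\calU$ on $I$, each relevant $e_j(E_0,k)$ already avoids $[-2,2]$, and for $t\neq 0$ its imaginary part is $\sin(k+2\pi j/L)\sinh(t)$, so (off finitely many $k$) the matrix $A(E_0,\cdot)$ continues directly to an analytic family on the strip $\bbV=\{z\mid\Re z\in I\}$ via the analytic branches of $\mu^\pm$. What must then be proved is that $A(E_0,k_0+it)$ is nonsingular for large $t$, and your Thomas resolvent bound does not hand you this for free (for complex $z$ the operator $H(z)$ is non-self-adjoint, and you would still have to convert invertibility of $H(z)-E_0$ into nonsingularity of the continued matrix $A$). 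The paper instead argues directly on $A$: expanding $T_V$ as in \eqref{TVFactor} gives $A=\tilde{\calP}^+(T_0)^{2R+1}\calI^-+\tilde{\calP}^+\tilde A\,\calI^-$, the identity $\tilde{\calP}^+\calI^-=\calI^-$ shows the leading term acts on eigenvectors with eigenvalues $\mu^\pm(e_j)$ of modulus $\gtrsim e^{ct}$, so it is bounded below by $C'e^{c(2R+1)t}$, while the remainder (containing at most $2R$ factors of $T_0$) is $O(e^{2Rct})$. This lower bound is the content of the lemma; without it (or an equivalent quantitative substitute) the identity-theorem argument has nothing to contradict.
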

\begin{proof}
    The set of $k\in\calB$ such that $(E_0,k)$ is in $\calA$ is finite (as these are non-constant curves) so it suffices to consider $k$ such that $(E_0,k)\in\calA^c$. Fix such $k_0$ and let $I$ be an interval containing $k_0$ so that $\{E_0\}\times I\subset\calA^c$. We will show that $F(E_0,k)$ vanishes only on a set of measure $0$ in $I$ or equivalently that $A(E_0,k)$ is only singular on a set of measure $0$.\par
    Now recall that $A(E_0,k)$ depends on $k$ through the expressions $e_j(E_0,k)$, which appears in $T_V$ and $\mu^\pm(e_j(E_0,k))$, the latter of which appears in $\calI^+$ and $\tilde{\calP}^+$ for some subset of $j$ in $\bbZ_L$. Since
    \begin{align*}
        e_j(E_0,k)=E_0-2\cos(k+it+2\pi j/L),
    \end{align*}
    and the Joukowsky maps $\mu^\pm(\cdot)$ admit analytic extensions to $\bbC\setminus[-2,2]$, we may analytically extend $A(E_0,k)$ to the vertical strip $\bbV=\{z\mid\Re{z}\in I\}$ so long as the image of this strip under each $e_j$, $j\in\calI$, avoids $[-2,2]$. For $k,t \in \bbR$, $e_j(E_0,k+it)$ has imaginary part
    \begin{align*}
        \sin(k+2\pi j/L)\sinh(t).
    \end{align*}
    Thus, unless $k+2\pi j/L\in \pi \bbZ$, we have that $e_j(E_0,k+it)$ has non-zero imaginary part for $t\neq 0$, and therefore avoids $[-2,2]$. We see then that, except at finitely many points, we may extend $A(E_0,k)$ to an analytic family of operators on $\bbV$.\par
    Now, since $A(E_0,k)$ is an analytic family on $\bbV$, to show that it is singular on a set of measure $0$ in $I$, it suffices to show that it does not vanish identically on $\bbV$. For this, we use the structure of $A(E_0,k)$. Recalling that $A(E,k)=\tilde{\calP}^+T_V\calI^-$, by expanding the product (\ref{TVFactor}), we may write
    \begin{align*}
        A(E,k)=\tilde{\calP}^+(T_0)^{2R+1}\calI^-+\tilde{\calP}^+\tilde{A}\calI^-,
    \end{align*}
    where $\tilde{A}$ is the sum of products, each of which contains at most $2R$ copies of $T_0$ in addition to terms of the form $V_x$, $|x|\leq R$. Since the eigenvalues of $T_0(E_0,k_0+it)$ are given by
    \begin{align*}
        \mu^\pm(E_0-2\cos(k_0+it+2\pi j/L)),
    \end{align*}
    we see that for large $t>0$, the exponential growth of cosine off the real axis ensures that there exist constants $C>0$ and $c>0$ such that
    \begin{align*}
        \op{T_0(E_0,k_0+it)}_{\textrm{op}}\leq Ce^{ct},
    \end{align*}
    and consequently, for some other $C>0$
    \begin{align*}
        \|\tilde{A}(E_0,k_0+it)\|_\textrm{op}\leq Ce^{c2Rt},
    \end{align*}
    by virtue of the boundedness of $V$.\par
    Now observe that the image of $\calI^-$ consists of eigenspaces of $T_0$ corresponding to $\mu^-(e_j)$ for $j\in\calI$ and furthermore that these eigenspaces lie in the image of $\tilde{\calP}^+$ because, by construction $\tilde{\calP}^+$ corresponds to the \emph{complement} of the eigenvalues $\mu^+$. In other words $\tilde{\calP}^+\calI^-=\calI^-$. The form of $\mu^+$ and the growth of cosine then easily show that for large $t$ there exists $C'>0$ so that
    \begin{align*}
        \abs{\mu^+(e_j(E_0,k_0+it))}>C'e^{ct}.
    \end{align*} 
    Therefore, for any $v\in\bbC^L\oplus \bbC^L$ and $t$ sufficiently large,
    \begin{align*}
        \|\tilde{\calP}^+(T_0)^{2R+1}\calI^-(E_0,k_0+it)v\|> C'e^{c(2R+1)t}\|v\|,
    \end{align*}
    so we have for large $t>0$
    \begin{align*}
        &\|A(E,k_0+it)\|_{\textrm{op}}=\|\tilde{\calP}^+(T_0)^{2R}\calI^-+\tilde{\calP}^+\tilde{A}\calI^-\|_{\textrm{op}}\\
        &\geq \|\tilde{\calP}^+(T_0)^{2R}\calI^-\|_{\textrm{op}} -\|\tilde{\calP}^+\tilde{A}\calI^-\|_{\textrm{op}}>C'e^{c(2R+1)t}-Ce^{c2Rt}
    \end{align*}
    and we conclude that $A$ is non-singular for $t>0$ sufficiently large, which completes the proof.
\end{proof}

With Lemma~\ref{analyticLem} and Lemma~\ref{lem:nonconstancy}, we may now prove Theorem~\ref{thm:analyticity}.

\begin{proof}[Proof of Theorem~\ref{thm:analyticity}]	
    We will show that there exists countably many open sets $U_l\subset \calB$, countably many analytic functions $\lambda_{l,i}:U_{l}\to \bbR$, and associated eigenprojectors
    \begin{align*}
    	k\mapsto \pi_{l,i}(k):\ell^{2}(\bbZ;\bbC^{L})\to \ell^{2}(\bbZ;\bbC^{L})
    \end{align*}
    such that the following holds:
    for almost every $k\in \calB$ such that $\calH_{\mathrm{pp}}(H(k))$ is not empty and
    \begin{align*}
        H(k)\vert_{\calH_{\mathrm{pp}}(H(k))}=\sum_{U_l\ni k}\sum_{i}\lambda_{l,i}(k)\pi_{l,i}(k).
    \end{align*}
    Since the eigenprojector associated with an analytically varying eigenvalue of an analytically varying matrix is analytic up to a discrete set, this will justify items~(\ref{item:thm1}) and  (\ref{item:thm2}) of the theorem. To prove this for $k$ such that $(E,k)\not\in\calA$ for all $E$, we simply apply Lemma~\ref{analyticLem} locally. 
    On the other hand, for $k$ with $(E,k)\in \calA$ for some  $E$, we recall that $\calA$ is defined by the union  over $j$ of 
    \begin{align*}
    	\{(2\cos(k + 2\pi j /L)\pm 2,k) \mid k\in \calB\},
    \end{align*}
    and that $\calV^{\pm}(E,k)$ are piecewise constant on these curves.
    Therefore, for each $j$
    \begin{align*}
        k\mapsto F(2\cos(k+2\pi j/L)\pm 2,k)
    \end{align*}
    are piecewise analytic functions and so they vanish on a (possibly empty) union of subintervals of $\calB$ and discrete points. 
    Adding the functions $k\mapsto 2\cos(k+2\pi j /L)\pm 2$ on each possible interval of vanishing to our collection of $\lambda_{l,i}$, we obtain the claim.\par
    Finally, (\ref{item:thm3}) is a direct consequence of Lemma~\ref{lem:nonconstancy}, so we are done.
\end{proof}

\section{Directional ballistic transport for the surface states} \label{generalTransportSection}
We now prove Theorem~\ref{thm:Z2ballistic}, by analyzing propagation in the $y$ and $x$ directions in turn. Recall that $Y_H(T)=e^{iTH}Ye^{-iTH}$ is the Heisenberg-evolved position operator in the $y$-directions.

Define the momentum operators $P^x:=i[H,X]$ and $P^y:=i[H,Y]$, which can be written explicitly as
\begin{align*}
    &(P^x\psi)(x,y)=i(\psi(x+1,y)-\psi(x-1,y))\\
    &(P^y\psi)(x,y)=i(\psi(x,y+1)-\psi(x,y-1)).
\end{align*}
We prove:
\begin{proposition}\label{YTransportThm}
    For any $\psi\in D(Y)\cap \calH_{\mathrm{sur}}\setminus\{0\}$, the asymptotic velocity
    \begin{align}\label{BTeq}
        \lim_{T\to\infty}\frac{1}{T}Y_H(T)\psi
    \end{align}
    exists and is non-zero. 
\end{proposition}
\begin{proof}
    Making use of the identity
    \begin{align*}
        Y_H(T)\psi =Y_H(0)\psi+\int_{0}^T P^y_H(t)\psi \:dt,
    \end{align*}
    valid for $\psi\in D(Y)$ by the proof of \cite[Thm. 2.1.]{damanik2015quantum} (see also \cite[Prop. 3.1]{Fillman2021}), it is enough to show that  $\lim\limits_{T\rightarrow\infty}\frac 1T\int_{0}^TP_H^y(t)\psi\: dt$ exists and is non-zero.
    
    Since the map
    \begin{align*}
        \psi\mapsto \frac1T\int_0^TP_H^y(t)\psi\:dt
    \end{align*}
    is bounded on $\ell^2(\bbZ^2)$ independently of $T$
    it suffices to establish the theorem for $\psi$ of the form
    \begin{align}
        \calU\psi=\left(\sum_{l=1}^N\int\limits_{U_l}^\oplus\sum_{n=1}^M\pi_{l,n}(k)\:\frac{dk}{\abs{\calB}}\right)\calU\psi
    \end{align}
    for some $N,M>0$. 
    
    Introducing  $\calP_\textrm{pp}(k)$ and $\calP_{\textrm{c}}(k)$, the projectors onto the pure point and continuous subspaces of $H(k)$, we write
    \begin{align*}
        &\calU\left(\frac1T\int_0^T P^y_H(t)\psi)\:dt\right)=\frac1T \int_0^T  e^{itH(k)}P^y(k)e^{-itH(k)} (\calU\psi)(k)\:dt\\
        &=\frac1T \int_0^T  e^{itH(k)}\calP_{\textrm{pp}}(k)P^y(k)e^{-itH(k)} (\calU\psi)(k)\:dt+\frac1T \int_0^T  e^{itH(k)}\calP_{\textrm{c}}(k)P^y(k)e^{-itH(k)} (\calU\psi)(k)\:dt\\
        &:= A(T) + B(T),
    \end{align*}    
    where we set
    \begin{align*}
        (P^y(k)\psi)(k,x,y)=i(e^{ik}\psi(k,x,y+1)-e^{-ik}\psi(k,x,y-1)).
    \end{align*}
    We will show below that
    \begin{align}\label{eq:ALim}
        \lim_{T\rightarrow\infty}A(T) =\left(\sum_{l=1}^N \int\limits_{U_l}^\oplus \sum_{n=1}^{M} \pi_{l,n}(k)P^y(k)\pi_{l,n}(k)\:\frac{dk}{|\calB|}\right)\calU\psi,
    \end{align}
    that the right-hand side is non-zero and 
    that $B(T)\rightarrow 0$.
    The proof of these first two facts is an argument from \cite{AschKnauf}, but for completeness, we add the details below.
    We note that
    \begin{align*}
        A(T)=\frac1T\int \limits_0^T\left(\sum_{l=1}^N \int\limits_{U_l}^\oplus \sum_{m=1}^{M_l}\sum_{n=1}^{M} e^{it(\lambda_{l,m}(k)-\lambda_{l,n}(k))}\pi_{l,m}(k)P^y(k)\pi_{l,n}(k)\:\frac{dk}{|\calB|}\right) \calU\psi\:dt,
    \end{align*}
    where $M_l$ is the (possibly infinite) number of branches over $U_l$.
    We then have
    \begin{align*}
        &\bigg\|A(T)-\left(\sum_{l=1}^N \int\limits_{U_l}^\oplus\sum_{n=1}^{M} \pi_{l,n}(k)P^y(k)\pi_{l,n}(k)\,\frac{dk}{|\calB|}\right)\calU\psi\bigg\|^2_{\ell^2(\bbZ;\bbC^L)}\\
        &\quad=\sum_{l=1}^N \int\limits_{U_l} \sum_{m=1}^{M_l}\left\|\pi_{\ell,m}(k)\sum_{\substack{n\ne m\\n=1}}^{M} \frac1T\int\limits_0^Te^{it(\lambda_{l,m}(k)-\lambda_{l,n}(k))}dtP^y(k)\pi_{\ell,n}(k)\calU\psi(k,\cdot,\cdot) \right \|^2_{\ell^2(\bbZ;\bbC^L)}\,  \frac{dk}{|\calB|}
    \end{align*}
    by the Pythagorean theorem. We observe that by the definition of the $\lambda_{l,n}(k)$, the integrand is $O(1/T)$ for each $k$. So,
    we may conclude via the dominated convergence theorem that \eqref{eq:ALim} holds.\par
    To see that the limit is non-zero, we note that for each $n$, $\lambda_{l,n}'(k)$ exists at all but finitely many points of $U_l$ and is non-zero. The claim then follows from the identity
    \begin{align*}
        \pi_{l,n}(k)P^y(k)\pi_{l,n}(k)=\frac{1}{2}\lambda_{l,n}'(k)\pi_{l,n}(k),
    \end{align*}
    which may be derived by differentiating $\pi_{l,n}(k)H(k)\pi_{l,n}(k)$.
    
    Finally, we show the limit of $B(T)$ is $0$. We have
    \begin{align*}
        \|B(T)\|^2=\sum_{l=1}^N \int\limits_{U_l} \left\|\sum_{n=1}^{M}\frac1T\int\limits_0^Te^{it(H(k)-\lambda_{l,n}(k))}\: dt\,\calP_{\mathrm{c}}(k)P^y(k)\pi_{l,n}(k)(\calU\psi)(k)\right\|^2_{\ell^2(\bbZ;\bbC^L)} \,  \frac{dk}{|\calB|},
    \end{align*}
    and denoting $\tilde{\psi}_{\ell,n}(k)=P^y(k)\pi_{\ell,n}(k)(\calU\psi)(k)$, we can write the integrand of each summand as
    \begin{align*}
        &\sum_{n,m=1}^{M}\frac{1}{T^2}\left\langle \int\limits_0^Te^{it(H(k)-\lambda_{l,n}(k))}\mathrm dt\,\calP_{\mathrm{c}}(k)\tilde{\psi}_{l,n}(k),\int\limits_0^Te^{is(H(k)-\lambda_{l,m}(k))}\calP_{\mathrm{c}} (k)ds\,\tilde{\psi}_{l,m}(k)\right\rangle_{\ell^2(\bbZ;\bbC^L)}\\
        &=\sum_{n,m=1}^{M}\frac{1}{T^2}\iint\limits_{[0,T]\times [0,T]}e^{i(t\lambda_{l,n}(k)-s\lambda_{l,m}(k))}\braket{\calP_{\mathrm{c}}(k)\tilde{\psi}_{l,n}(k),e^{iH(k)(s-t)}\calP_{\mathrm{c}}(k)\tilde{\psi}_{\ell,m}(k)}\, ds\,dt.
    \end{align*}
    Now, use the spectral theorem to see that
    \begin{align*}
        &\frac{1}{T^2}\iint\limits_{[0,T]\times[0,T]}e^{i(s\lambda_{l,n}(k)-t\lambda_{l,m}(k))}\braket{\calP_{\mathrm{c}}(k)\tilde{\psi}_{l,n}(k),e^{iH(k)(s-t)}\calP_{\mathrm{c}}(k)\tilde{\psi}_{l,m}(k)}\, ds\, dt\\
        &=\frac{1}{T^2}\iint\limits_{[0,T]\times[0,T]}e^{i(s\lambda_{l,n}(k)-t\lambda_{l,m}(k))}\,ds\,dt\int\limits _{\bbR}  e^{i(s-t)\lambda}\mu(d\lambda)\, ds\, dt,
    \end{align*}
    where $ \mu $ is the spectral measure of $ \calP_{\mathrm{c}}(k)\tilde{\psi}_{\ell,n}(k)$ and $\calP_{\mathrm{c}}(k)  \tilde{\psi}_{\ell,m}(k) $, which is, due to the projections, a continuous measure. As in the proof of Wiener's theorem, we use Fubini's theorem to rewrite the integral as
    \begin{align*}
        \int\limits_{\bbR}\frac{1}{T^2}\iint\limits_{[0,T]\times[0,T]}e^{is(E_{\ell,n}-\lambda)}e^{it(\lambda-E_{\ell,m})} \,ds\,dt\,\mu(d\lambda)
    \end{align*}
    and observe that the inner integral goes to $ \chi_{\{0\} }(\lambda_{\ell,n}-\lambda)\chi_{\{0\} }(\lambda_{\ell,m}-\lambda) $. Since the integrand is uniformly bounded, we may use the dominated convergence theorem to find that for $k$
    \begin{align*}
        &\lim_{T\to\infty}
        \int\limits_{\bbR}\frac{1}{T^2}\sum_{n,m=1}^{M}\iint\limits_{[0,T]\times[0,T]}e^{is(\lambda_{l,n}-\lambda)}e^{it(\lambda-\lambda_{l,m})} \,ds\,dt\,\mu(d\lambda)\\
        &=\mu(\{\lambda_{l,n}\}\cap\{\lambda_{l,m}\})=0.
    \end{align*}
    Applying dominated convergence to the original integral for $\|B(T)\|^2$ allows us to conclude.
\end{proof}
We will now prove the following result regarding the absence of transport in the $x$-direction. 
\begin{proposition}\label{XAbsenceThm}
     For any $\psi \in  D(X)\cap \calH_\mathrm{sur}$ the asymptotic velocity in the $x$-direction vanishes, i.e.
    \begin{align*}
        \lim_{T\rightarrow\infty} \frac{1}{T}X_H(T)\psi=0.
    \end{align*}
\end{proposition}
First, we prove a technical lemma:
 \begin{lemma}\label{TechLemmaApp}
    Let $\varphi_n$ and $\varphi_m$ be eigenfunctions of $H(k)$ of eigenvalue $\lambda_n$ and $\lambda_m$, respectively. Then for any bounded operator $A:\ell^2(\bbZ\times \bbZ_L)\rightarrow \ell^2(\bbZ\times \bbZ_L)$, we have that
    \begin{align*}
        \lim\limits_{T\rightarrow \infty}\frac{1}{T^2}\iint\limits_{[0,T]\times[0,T]}e^{i(s\lambda_n-t\lambda_m)}\braket{A\varphi_n ,e^{i(t-s)H(k)}A \varphi_m}\,  dt\, ds=0.
         \end{align*}
 \end{lemma}
 \begin{proof}
     We expand $A\varphi_n$ as
    \begin{align*}
         A\varphi_n =\sum_{\ell=1}^\infty a_{n,\ell}\varphi_\ell+\calP_{\textrm{c}}A\varphi_n
    \end{align*}
    for $ a_{n,\ell}=\braket{\varphi_n, A\varphi_\ell} $, and $\calP_{\textrm{c}}$ the projection to the continuous subspace of $H(k)$.
    This allows us to obtain
    \begin{align} \label{pSplit}
        \begin{split}
        	&\frac{1}{T^2}\iint\limits_{[0,T]\times[0,T]}e^{i(s\lambda_n-t\lambda_m)}\braket{A\varphi_n ,e^{i(t-s)H(k)}A \varphi_m}\,  dt\, ds\\
        	&\quad=\frac{1}{T^2}\sum_{\ell=1}^{\infty}a_{n,\ell}a_{m,\ell} \iint\limits_{[0,T]\times[0,T]}e^{-it(\lambda_\ell-\lambda_n)}e^{-is(\lambda_m-
            \lambda_\ell)}\,dt\,ds\\
            &\qquad+\frac{1}{T^2}\iint\limits_{[0,T]\times[0,T]}e^{i(s\lambda_n-t\lambda_m)}\braket{\calP_{\textrm{c}}A\varphi_n,e^{i(t-s)H}\calP_{\textrm{c}}A\varphi_m}\:dt\,ds
        \end{split}
    \end{align}
    because the cross-terms vanish by orthogonality. Now we consider each summand separately. \par
    For the first summand, if either  $\lambda_n=\lambda_\ell$ or $\lambda_m=\lambda_\ell$, we may use Lemma 2.3 of \cite{simon1990absence} to conclude that $ a_{n,\ell}=0$ or $a_{m,\ell}=0 $ respectively. If $\lambda_n\neq \lambda_\ell$, we get that the $t$ integral is $O(1)$, and the integral over $s$ is $O(T)$, which gives the desired decay. This shows that for each $\ell$ the summand goes to $0$. Since the integral is bounded by $ 1 $ and 
    \begin{align*}
        \sum_{\ell=1}^{\infty} \abs{a_{n,\ell}a_{m,\ell}} \leq \sqrt{\sum_{\ell=1}^{\infty} \abs{a_{\ell,m}}^2} \cdot\sqrt{\sum_{\ell=1}^{\infty} \abs{a_{\ell,m}}^2} \leq \|A\varphi_n\|\|A\varphi_m\|,
    \end{align*}
    we may use the dominated convergence theorem to pass the limit in $ T $ under the sum in (\ref{pSplit}) to see that it is $ O(\frac{1}{T}) $.\par
    The second term goes to $0$ by the proof that $\lim_{T\rightarrow\infty}B(T)=0$ in Proposition~\ref{YTransportThm}.
 \end{proof}
Now we can prove Proposition~\ref{XAbsenceThm}.
\begin{proof}[Proof of Proposition~\ref{XAbsenceThm}]
    As in the proof of Proposition~\ref{YTransportThm}, it is enough to show that 
    \begin{align*}
    \frac1T\int_{0}^TP_H^x(t)\psi \:dt\rightarrow 0
    \end{align*}
    in $\ell^2(\bbZ^2)$ as $T\to\infty$.
    Note that $P^x$ is bounded and it respects the $y$-geometry, i.e., $\calU P^x \psi=P^x \calU\psi$. Thus, by Plancherel and Fubini we may write 
    \begin{align*}
        \left\|\frac1T\int\limits_{0}^TP_H^x(t)\psi \:dt\right\|_{\ell^2(\bbZ^2)} &=\left\|\int\limits_\calB \frac1T\int\limits_0^T P^x_{H(k)}(t) (\calU\psi)(k)\:dt \frac{dk}{|\calB|}\right\|_{\ell^2(\bbZ\times \bbZ_L)}\\
        &\leq \int\limits_\calB \left\|\frac1T \int\limits_0^T P^x_{H(k)}(t)(\calU\psi)(k)\:dt\right\|_{\ell^2(\bbZ\times \bbZ_L)}\:\frac{dk}{|\calB|}.
    \end{align*}
    By dominated convergence, it suffices to show that for each $k$,
    \begin{align*}
        \lim_{T\rightarrow\infty}\left\|\frac1T\int\limits_0^T P^x_{H(k)}(t)(\calU\psi)(k)\right\|_{\ell^2(\bbZ\times\bbZ_L)}=0.
    \end{align*}
    
    For some fixed $k$, we write
    \begin{align*}
        (\calU\psi)(k)=\sum_{\alpha}c_\alpha \varphi_\alpha,
    \end{align*}
    where the $\varphi_\alpha$ are an orthonormal basis of eigenfunctions of $H(k)$, with eigenvalues $\lambda_\alpha$.
    By a limiting argument, it suffices to assume that the sum is finite.
    
    Now we write
    \begin{align*}
    \left\|\frac1T P^x_{H(k)}(t)(\calU\psi)(k)\right\|_{\ell^2(\bbZ\times\bbZ_L)}^2=
    \frac{1}{T^2}\int\limits_{0}^T\int\limits_{0}^T\braket{P_{H(k)}^x(s)(\sum_\alpha c_\alpha \varphi_\alpha),P_{H(k)}^x(t)(\sum_\beta c_\beta \varphi_\beta)}\:dt ds\\
    =\sum_{\alpha,\beta}\frac{c_\alpha\overline{c}_\beta}{T^2}\iint\limits_{[0,T]^2}e^{i(is\lambda_\alpha-it\lambda_\beta}\braket{P^x\varphi_\alpha,e^{i(t-s)H(k)}\varphi_\beta}\:dsdt
        \end{align*}
    The conclusion now follows from Lemma~\ref{TechLemmaApp}.
\end{proof}

Actually, up to interchanging symbols, the above argument immediately proves the following result:
\begin{theorem}\label{Thm:Simon}
    Let $H=\Delta +V$ be a Schrödinger operator on $\bbZ^m$ for any $m\geq 1$ with $V$ real-valued and bounded. Then for any $\psi\in \calH_{\mathrm{pp}}(H)$,
    \begin{align*}
        \lim_{T\rightarrow \infty} \frac{1}{T}\vec{Q}_H(T)\psi=0.
    \end{align*}
\end{theorem}
Note that this differs from Simon's classic theorem on the absence of ballistic motion for pure point states \cite{simon1990absence} because it does not assume that $H$ has only pure point spectrum.

We now conclude:
\begin{proof}[Proof of Theorem~\ref{thm:Z2ballistic}]
    The proof follows immediately from combining Proposition~\ref{YTransportThm} and Proposition~\ref{XAbsenceThm}.
\end{proof}

\section{Ballistic transport for the scattering states}\label{scatteringSection}
In this section, we will show that there is a dense set of states $\calD\subset \ell^2(\bbZ^2)$ such that for $\psi \in \calD$, $\Omega \psi $ exhibits ballistic transport. This is based on a criterion for a scattering state to exhibit ballistic transport, which we prove in Appendix~\ref{scatteringAppendix}. In fact, the periodicity plays no role in this section; only the compact support in the $x$-directions matters, as one expects from such scattering arguments. We also comment that the proof idea translates immediately to higher-dimensional and continuum settings.  \par
For $\psi\in \ell^2(\bbZ)$ let its Fourier transform $\hat{\psi}(\xi)\in L^2(\bbT)$ be given by
\begin{align*}
    \hat{\psi}(\xi)=(2\pi)^{-1}\sum_{x\in\bbZ}\psi(x)e^{-ix\xi},
\end{align*}
where $\bbT$ is the torus identified with $[0,2\pi)$ modulo equivalence. Note that this differs from the Floquet transform since it does not include the period $L$.
We define
\begin{align*}
     &\calD_a=\text{Span}(\{\psi_x\otimes \psi_y \in \ell^2(\bbZ)\otimes \ell^2(\bbZ) \mid \hat{\psi}_x\otimes\hat{\psi}_y\in C^\infty(\bbT)\otimes C^\infty(\bbT)\text{ and }\supp \hat{\psi}_x\Subset \tilde{B}_a^c\}),
\end{align*}
where
\begin{align*}
    \tilde{B}_a^c=\{\xi\in\bbT\mid \abs{\sin(\xi)}>a\}.
\end{align*}
Now let $\calD=\bigcup\limits_{a>0}\calD_a$, which is dense in $\ell^2(\bbZ^2)$ because the sets $\tilde{B}_a^c$, for $a>0$, exhaust $\bbT^2$.\par
We begin with the following one-dimensional propagation estimate:

\begin{proposition}\label{DiscNonStat}\label{propEst2}
    Suppose that $\psi \in\ell^2(\bbZ)$ is such that $\hat{\psi}\in C^\infty(\bbT)$  and satisfies
    $\supp \hat{\psi}\subset \tilde{B}_a^c$
    for some $a>0$. Then for any $\ell>0$, there exists a constant $C>0$ depending only on $\psi,\ell,$ and $a$ so that for all $x\in\bbZ$ and $t\in\bbR$ such that $\frac{|x|}{\abs{t}}<a$ we have that
    \begin{align*}
        \abs{(e^{-itH_0}\psi)(x)}< C(1+|x|+\abs{t})^{-\ell},
    \end{align*}
    and also
    \begin{align*}
        \|\chi_{|x|\leq R}e^{-itH_0}\psi\|_{\ell^2(\bbZ)}\leq C(1+|t|)^{-\ell},
    \end{align*}
    uniformly for $t$ and $R$ satisfying $\frac{R}{|t|}<a$.
\end{proposition}
\begin{proof}
We integrate by parts $\ell$ times in the inversion formula
    \begin{align*}
        (e^{-itH_0}\psi)(x)=\int\limits_{\bbT}e^{i(\xi x-2t\cos(\xi))}\hat{\psi}(\xi)\:d\xi
    \end{align*}
    to find that
    \begin{align*}
        (e^{-itH_0}\psi)(x)=\int\limits_\bbT e^{i(\xi x-2t\cos(\xi))}L^\ell(\hat{\psi})\:dk,
    \end{align*}
    where $L$ is the differential operator $f(\xi)\mapsto\frac{d}{d\xi}[\frac{i}{x+2t\sin(\xi)}f(\xi)]$. By the product rule, it follows that
    \begin{align*}
        \left|(e^{-itH_0}\psi)(x)\right|\leq C \max_{\xi\in\supp \hat{\psi}}|x+2t\sin(\xi)|^{-\ell}
    \end{align*}
    for $C$ depending on the first $\ell$ derivatives of $\hat{\psi}$. Now, on the support of $\hat{\psi}$ we have that
    \begin{align*}
        |x+2t\sin(\xi)|>2|t||\sin(\xi)|-|x|>2|t|a-|x|
    \end{align*}
    and it is easy to check that there is some $c>0$ such that
    \begin{align*}
        2|t|a-|x|>c(|x|+|t|)
    \end{align*}
    uniformly for $x$ and $t$ satisfying $\frac{|x|}{|t|}<a$. The first inequality now follows immediately, whereas the second now follows from summing the first inequality.
\end{proof}
We can now prove:
\begin{proposition}\label{scatteringProp}
    Suppose that $V$ is strip periodic on $\ell^2(\bbZ^2)$. Then for any $\psi\in\calD$, we have that $\Omega\psi$ exists and exhibits ballistic transport.
\end{proposition}
\begin{proof}
    By Proposition \ref{BTForFree} it is enough to show that for any $\psi\in \calD_a$, $\ell>0$, and $t$ large enough, there is $C>0$ so that
    \begin{align}
        &\|Ve^{-itH_0}\psi \| \leq C(1+t)^{-\ell}\label{Eq:1FreeBt}\\
        &\|Ve^{-itH_0}\vec{P}\psi\|\leq C(1+t)^{-\ell}\label{Eq:1.5FreeBt}\\
        &\|XVe^{-itH_0}\psi \| \leq C(1+t)^{-\ell}\label{Eq:2FreeBt}\\
        &\|YVe^{-itH_0}\psi \| \leq C(1+t)^{-\ell}\label{Eq:3FreeBt}.
    \end{align}
    where $\vec{P}=(P^x,P^y)$. 
    By linearity it is enough to consider $\psi=\psi_x\otimes \psi_y$.
    For such a state, we may write
    \begin{align*}
        \|Ve^{-itH_0}\psi \|&\leq  \|V\|_{\ell^\infty}\|\chi_{|x|\leq R}e^{-itH_0}\psi\|\\
        &\leq C\|\chi_{|x|\leq R}e^{-itH_0^x}\psi_x\|_{\ell^2(\bbZ)}\cdot\|e^{-itH_0^y}\psi_y\|_{\ell^2(\bbZ)},
    \end{align*}
    and similarly for $\vec{P} \psi$. As the components of $\vec{P} \psi$ are again in $\calD_a$, from Proposition \ref{DiscNonStat} we obtain the inequalities \eqref{Eq:1FreeBt} and \eqref{Eq:1.5FreeBt} for $t>\frac{R}{a}$.\par
    Next, we note that
    \begin{align*}
        \|XVe^{-itH_0}\psi \|\leq R\|V\|_{L^\infty}\|\chi_{|x|\leq R}e^{-it H_0}\psi\|,
    \end{align*}
    so the same argument yields the inequality \eqref{Eq:2FreeBt}, as well. Finally, we have that
    \begin{align*}
        \|YVe^{-itH_0}\psi\|\leq\|V\|_{\ell^\infty}\|\chi_{|x|\leq R}e^{-itH_0^x}\psi_x\|_{\ell^2(\bbZ)}\cdot\|Ye^{-itH_0^y}\psi_y\|_{\ell^2(\bbZ)}.
    \end{align*}
    The last term in the product is bounded by $C(1+t)$ due to Lemma \ref{radinSimonLem}, so again applying Proposition \ref{DiscNonStat} yields \eqref{Eq:3FreeBt}, thus completing the proof.
\end{proof}

\section{Density of $D(\vec{Q})$ in $\calH_{\mathrm{sur}}$}\label{Domains}
In this section, we show that $D(\vec{Q})$ is dense in $\calH_{\mathrm{sur}}$ in order to complete the proof of Theorem~\ref{thm:Z2ballistic}.

First, we give a condition for being in $D(Y)$, using the following Paley-Wiener type theorem (see also Theorem 4.2 in \cite{Kuchment}):
\begin{proposition}\label{paleywiener}
    If $(U\varphi)(k,x,y)\in C^1(\calB;\ell^2(\bbZ\times\bbZ_L))$ then $\varphi\in D(Y)$.
\end{proposition}
\begin{proof}
    Integrating by parts in the inversion formula
     \begin{align*}
        \varphi(x,y) =\int\limits _{\calB}e^{ik y}U\varphi(k,x,y)\:  \frac{dk}{\abs{\calB}},
     \end{align*}
    we have
    \begin{align*}
        y\varphi(x,y)=i\int\limits_\calB e^{iky}\frac{\partial}{\partial k}(U\varphi)(k,x,y)\:\frac{dk}{|\calB|}.
    \end{align*}
    The result now follows from the unitarity of $U$.
\end{proof}
Membership in $D(X)$ is somewhat more delicate. Recall from Proposition~\ref{T0Summary} that if $E_0$ is an eigenvalue of $H(k_0)$ then the set of $j\in\bbZ_L$ such that $e_j(E_0,k_0)\not\in[-2,2]$ must be non-empty. Now we prove:
\begin{proposition}\label{eigenfunctionD(X)}
    Let $\varphi$ be an eigenfunction of $H(k_0)$ with eigenvalue $E_0$ and let $\calJ\subset \bbZ_L$ be the set of modes such that $e_j(E_0,k_0)\not\in[-2,2]$.
    Let
    \begin{align*}
        \delta(E_0,k_0) := \min_{\pm }\min_{j\in \calJ} |\pm 2 - e_j(E_0,k_0)|.
    \end{align*}
    Then there exists $C_\delta$ depending continuously on $\delta$ so that
    \begin{align*}
        \|X \varphi\|_{\ell^2(\bbZ^2)}\leq C_\delta \|\varphi\|_{\ell^2(\bbZ^2)}.
    \end{align*}
\end{proposition}

\begin{proof}
    From Proposition~\ref{T0Summary},
    $\vec{\varphi}_{R-1}\oplus \vec{\varphi}_R$ lies in $\calV^+(E_0,k_0)\subset \bbC^L\oplus \bbC^L$.
    By definition, this subspace is the sum of the eigenspaces of $T_0(E_0,k_0)$ with eigenvalues $\mu^+(e_j(E_0,k_0))$, all of which have modulus less than $1$, for $j\in \calJ$.
    By the continuity of $\mu^+$ from $\bbC\setminus [-2,2]$ to  $\bbD$, we may find some constant $c_\delta<1$ depending only on $\delta$ such that
    $\max_{j\in \calJ}|\mu^+(e_j(E_0,k_0)|<c_\delta$.
    Therefore, because
    \begin{align*}
        \begin{pmatrix}
            \vec{\varphi}_{R+n} \\\vec{\varphi}_{R_n-1}
        \end{pmatrix}
        =T^n(E_0,k_0)\begin{pmatrix}
            \vec{\varphi}_R\\ \vec{\varphi}_{R-1}
            \end{pmatrix}.
    \end{align*}
    we have that
    \begin{align*}
    |\varphi(R+n,y)| \leq c_\delta^n \|\varphi\|_{\ell^2(\bbZ\times \bbZ_L)}.
    \end{align*}
    for all $n>0$.
    The same argument shows that $|\varphi(-R-n,y)| \leq c_\delta^n \|\varphi\|_{\ell^2(\bbZ\times \bbZ_L)}$, so by summing we may conclude.
\end{proof}
Now we can establish the density of $D(\vec{Q})$ in $\calH_{\textrm{sur}}$.
\begin{proposition}\label{pr:D(Q)Density}
    We have that $D(\vec{Q})\cap \calH_{\mathrm{sur}}$ is dense in $\calH_{\mathrm{sur}}$.
\end{proposition}
\begin{proof}
    From Theorem~\ref{thm:analyticity}, any state in $\psi\in \calH_{\mathrm{sur}}$ is given by
    \begin{align*}
        (\calU\psi)(k) = \sum_{l,i} a_{l,i}(k)\varphi_{l,i}(k),
    \end{align*}
    for some functions $a_{l,i}:U_l\rightarrow \bbC$, and each  $\varphi_{l,i}(k,x,y)$ is a normalized eigenfunction of $H(k)$ that is analytic up to a finite set of points.
    In particular, we have that
    \begin{align*}
        \|\psi\|_{\ell^2(\bbZ^2)}^2=\sum_{l,i}\|a_{l,i}\|_{L^2(\calB)}^2.
    \end{align*}
    It follows easily then that the set of  $\psi\in \calH_{\textrm{sur}}$ with $a_{l,i}$ that are $C^1$ and supported away from the points of non-analyticity of $\varphi_{l,i}$ are dense in $\calH_{\textrm{sur}}$.
    By Proposition~\ref{paleywiener}, these states are in $D(Y)$ so we have shown that $D(Y)$ is dense in $\calH_{\textrm{sur}}$.
    
    Now, we show that $D(X)$ is dense in $\calH_{\textrm{sur}}$. For this, let $\lambda_{l,i}(k):U_l\rightarrow \bbR$ be the eigenvalue corresponding to $\varphi_{l,i}(k)$.
    Each $\lambda_{l,i}$ is an analytic function of $k$ except at finitely many points, and thus the set of $k$ for which $ e_j(\lambda_{l,i}(k),k)=\pm2$ for each $j$ is either finite or a full interval.
    In the first case, we observe that the quantity $\delta(\lambda_{l,j}(k),k)$ defined in Proposition~\ref{eigenfunctionD(X)} will be uniformly bounded away from $0$ for $k$ away from a finite set of points.
    On the other hand, if $ e_j(\lambda_{l,i}(k),k)=\pm2$ for all $k$ in an interval then $\delta(\lambda_{l,i}(k),k)$ is again bounded away from $0$ so long as $e_{j'}(\lambda_{l,i}(k),k)\pm 2$ is non-zero, for any $j'\neq j$, which may happen at only finitely many points of $k$.
    In either case, by choosing $a_{l,i}$ to vanish in a neighborhood of these discrete sets, we will obtain an element of $D(X)$ by Proposition~\ref{eigenfunctionD(X)}, from which density is immediate.
    
    Finally, it is easy to see from combining the above constructions that in fact $D(\vec{Q})=D(X)\cap D(Y)$ is dense in $\calH_{\textrm{sur}}$, so we are done.
\end{proof}
Combining this result with Proposition \ref{scatteringProp} yields Theorem \ref{mainCor}:
\begin{proof}[Proof of Theorem \ref{mainCor}]
    Recall from Proposition~\ref{Completeness} that we have
    \begin{align*}
        \ell^2(\bbZ^2)=\calH_\textrm{sur}\oplus \Ran\Omega.
    \end{align*}
    We have just shown in Proposition~\ref{pr:D(Q)Density} that $D(\vec{Q})$ is dense in $\calH_\textrm{sur}$ and we know from Theorem~\ref{thm:Z2ballistic} that this entire set must exhibit ballistic transport.
    Thus, we need only show that a dense subset of $\Ran\Omega$ exhibits ballistic transport. The set $\calD$ of Proposition \ref{scatteringProp} is dense and any $\psi\in \Omega\calD$ exhibits ballistic transport. Since $\Omega$ is a partial isometry we may conclude.
\end{proof}

\appendix

\section{Ballistic transport via Cook's method}\label{scatteringAppendix}
In this section, we give a criterion, similar to the criterion given in Cook's method \cite{RSVol3}, to show certain states exhibit ballistic transport. This lemma gives rigor to the idea that asymptotically free states should exhibit ballistic transport. Though this result seems natural, it appears to be absent from the literature. In \cite{enss1983asymptotic}, the author proves a similar result only in the strong resolvent sense, which does not imply the strong limit sense that we prove here- see \cite{damanik2024ballistic} for the differences.  \par
In this section, we will work in more generality than the rest of the paper: we study operators of the form
\begin{align*}
    H=H_0+V,
\end{align*}
acting either on $\ell^2(\bbZ^d)$ or $L^2(\bbR^d)$, in which case $H_0=-\frac{1}{2}\Delta$, for any $d\geq 1$. 
We assume only that $V$ is real-valued and bounded, though, as is typical in scattering theory, one could likely relax this assumption, for instance, to relative $H_0$-boundedness.\par
We recall that we say that a state $\psi \in \calH\cap D(Q_j)$ (where $Q_j$ is the $j$th component of the position operator) exhibits ballistic transport in the direction of $\e_j$ if we have that 
\begin{align*}
    \lim\limits_{t\rightarrow\infty}\frac{Q_j(t)}{t} \psi
\end{align*}
exists and is nonzero. We also define the momentum operators, $P_j$ which are given by $-i\partial_{x_j}$ on $\bbR^d$ and 
\begin{align*}
    (P_j\psi)(n)=-i(\psi(n+\e_j)-\psi(n-\e_j))
\end{align*}
on $\bbZ^d$.
\par
As above, the wave operator $\Omega$ (when it exists) is defined via the strong limit
\begin{align*}
    \Omega=\slim\limits_{t\rightarrow \infty } \Omega(t)
\end{align*}
for
\begin{align*}
    &\Omega(t)=e^{itH}e^{-itH_0}.
\end{align*}
We also recall that $\psi \in \text{Ran}(\Omega)$ is called asymptotically free, and its evolution is close to the free evolution. Typically, one shows the existence of the wave operator on $\Omega \psi$ via Cook's method \cite[Theorem XI.4]{RSVol3}, which involves controlling the following function of $t$
\begin{align}\label{Cook'sQuantity}
    Ve^{-itH_0}\psi
\end{align}
in $L^2$. Our result is an extension of this method to settings in which one can control \eqref{Cook'sQuantity} in a stronger norm.
\par
Following Radin-Simon \cite{RadinSimon}, we define the following subspaces
\begin{align*}
    S_j(\bbR^d)=\{f\in L^2(\bbR^d)\mid Q_jf\in L^2, P_jf\in L^2\}
\end{align*}
and 
\begin{align*}
    S_j(\bbZ^d)=\{f\in \ell^2(\bbZ^d)\mid Q_jf\in \ell^2(\bbZ^d)\}
\end{align*}
equipped with the following norms
\begin{align*}
    \|f\|_{S_j(\bbR^d)} =\sqrt{\|f\|_{H^1}^2+\|Q_jf\|_2^2}
\end{align*}
and
\begin{align*}
    \|f\|_{S_j(\bbZ^d)}= \sqrt{\|f\|_2^2+\|Q_jf\|_2^2}.
\end{align*}
We emphasize that whenever there is no subscript to the norm, it is simply the $L^2$ (or $\ell^2$) norm. Before proving these facts, we record a simple technical lemma that will be used in the proofs that follow.
\begin{lemma}
    Let $\psi:\bbR\to L^2(\bbR^d)$ or $\psi:\bbR\to \ell^2(\bbZ^d)$ be continuous. Then, for $s<t$,
    \begin{align}\label{eq:interchange}
        \left\|Q_j\int\limits_{s}^t\psi(\tau)\:d\tau\right\|\leq \int\limits_{s}^t\|Q_j\psi(\tau)\|\:d\tau,
    \end{align}
    where either side of the above inequality may be infinite. 
\end{lemma}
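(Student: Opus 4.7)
The plan is to first dispose of the infinite case trivially: if $\int_s^t\|Q_j\psi(\tau)\|\,d\tau=\infty$, the stated inequality holds vacuously. Thus I may assume the right-hand side is finite, and the goal is twofold: show that $\int_s^t\psi(\tau)\,d\tau$ lies in $D(Q_j)$, and verify the claimed bound.

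The key observation is that $Q_j$ is a closed (multiplication) operator on $L^2(\bbR^d)$, so I would like to invoke the general commutation principle for closed operators with Bochner integrals (often called Hille's theorem): if $A$ is a closed operator, $f\colon[s,t]\to L^2$ is Bochner integrable with values in $D(A)$ a.e., and $\tau\mapsto Af(\tau)$ is also Bochner integrable, then $\int_s^t f(\tau)\,d\tau\in D(A)$ and $A\int_s^t f(\tau)\,d\tau=\int_s^t Af(\tau)\,d\tau$. To apply this to $A=Q_j$ and $f(\tau)=\psi(\tau)$, I must check that $\tau\mapsto Q_j\psi(\tau)$ is Bochner integrable on $[s,t]$. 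The finiteness assumption gives integrability of the norm; measurability follows by approximating $Q_j$ strongly on its domain by the bounded operators $Q_j^{(n)}=\chi_{\{|x_j|\leq n\}}Q_j$: each $\tau\mapsto Q_j^{(n)}\psi(\tau)$ is continuous since $\psi$ is, and the pointwise limit $Q_j\psi(\tau)$ is therefore strongly measurable on the set where $\psi(\tau)\in D(Q_j)$ (which has full measure by the finiteness hypothesis).

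Given the commutation identity, the inequality reduces to Minkowski's integral inequality applied in $L^2(\bbR^d)$:
\begin{align*}
\left\|Q_j\int_s^t\psi(\tau)\,d\tau\right\|=\left\|\int_s^t Q_j\psi(\tau)\,d\tau\right\|\leq\int_s^t\|Q_j\psi(\tau)\|\,d\tau.
\end{align*}
Alternatively, one can bypass the abstract closed-operator machinery entirely by exploiting the multiplication-operator structure: Bochner integrals in $L^2$ admit the pointwise a.e.\ representation $\left(\int_s^t\psi(\tau)\,d\tau\right)(x)=\int_s^t\psi(\tau)(x)\,d\tau$, so multiplying by $x_j$ and taking $L^2$ norm gives $\|Q_j\int\psi\|^2=\int_{\bbR^d}\left|\int_s^t x_j\psi(\tau)(x)\,d\tau\right|^2 dx$, and Minkowski's integral inequality applied directly yields the bound.

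I do not anticipate any substantive obstacle — this is a packaging lemma meant to legitimize interchanging $Q_j$ with the time integrals appearing throughout the Cook-type estimates in Appendix \ref{scatteringAppendix}. The only mildly delicate point is measurability of $\tau\mapsto Q_j\psi(\tau)$, handled by the truncation argument sketched above.
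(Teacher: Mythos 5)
Your argument is correct, and it proves the lemma by a route that is close in spirit but not identical to the paper's. The paper never commutes the unbounded operator $Q_j$ with the Bochner integral: it introduces the bounded truncations $F_N(Q_j)$ given by multiplication by $|q_j|\chi_{\{|x_j|\leq N\}}$, pulls these \emph{bounded} operators through the integral, applies the triangle inequality for Bochner integrals, and then sends $N\to\infty$ using monotone convergence on both sides. That single computation delivers the inequality with no case analysis and with no appeal to Hille's theorem; in particular it covers the situation where the left-hand side is infinite (i.e.\ $\int_s^t\psi(\tau)\,d\tau\notin D(Q_j)$) automatically, since $\|Q_j\varphi\|=\lim_N\|F_N(Q_j)\varphi\|$ holds in $[0,\infty]$ for every $\varphi\in L^2$. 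Your version buys slightly more when the right-hand side is finite — namely membership of the integral in $D(Q_j)$ together with the identity $Q_j\int\psi=\int Q_j\psi$, not just the norm bound — at the cost of invoking the closed-operator commutation theorem and of handling the infinite case separately; your measurability argument via the bounded truncations $\chi_{\{|x_j|\leq n\}}Q_j$ is essentially the same device the paper uses throughout its proof. Your second, pointwise route via the a.e.\ representation of the Bochner integral plus Minkowski's integral inequality is also sound and is perhaps the closest in substance to what the paper's truncation argument accomplishes. Only the norm inequality is actually needed downstream in Proposition \ref{BTForFree}, so either packaging suffices.
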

\begin{proof}
    We define
    \begin{align*}
        F_N(q_j)=|q_j|\chi_{W_N}(q_j)
    \end{align*}
    for $W_N=\{x\in \bbR^d: |x_j|\leq N\}$, or the discrete analog, and denote by $F_N(Q_j)$ the corresponding multiplication operator.  
    By the monotone convergence theorem, we have
    \begin{align*}
        \|Q_j\varphi\|=\lim_{N\to\infty}\| F_N(Q_j)\varphi\| 
    \end{align*}
    for any $\varphi\in L^2(\bbR^d)$, or $\varphi\in \ell^2(\bbZ^d)$. Thus, 
    \begin{align*}
        \left\|Q_j\int\limits_{s}^t\psi(\tau)\:d\tau\right\|& =\lim_{N\to\infty}\left\|F_N(Q_j)\int\limits_{s}^t\psi(\tau)\:d\tau \right\|=\lim_{N\to\infty}\left\|\int\limits_{s}^tF_N(Q_j)\psi(\tau)\:d\tau \right\|\\
        &\leq \lim_{N\to\infty}\int\limits_{s}^t\|F_N(Q_j)\psi(\tau)\|\:d\tau
    \end{align*}
    using the boundedness of $F_N(Q_j)$. Using the monotone convergence theorem again, \eqref{eq:interchange} follows.
\end{proof}
We will also need this ballistic upper bound:
\begin{lemma}\label{radinSimonLem}
    Suppose that $V\in L^\infty(\bbR^d)$ and $\psi\in D(Q_j)$ for some index $j$. Then $e^{-itH}\psi\in D(Q_j)$ for all $t\in\bbR$ and there is some $C>0$ so that
    \begin{align*}
        \|Q_je^{itH}\|_{S_j(\bbR^d)\rightarrow L^2(\bbR^d)}<C(1+|t|).
    \end{align*}
    Similarly, suppose that $V\in \ell^\infty(\bbZ^d)$ and $\psi\in D(Q_j)$ for some index $j$. Then $e^{-itH}\psi\in D(Q_j)$ for all $t\in\bbR$ and there is some $C>0$ so that
    \begin{align*}
        \|Q_je^{itH}\|_{S_j(\bbZ^d)\rightarrow \ell^2(\bbZ^d)}<C(1+|t|).
    \end{align*}
\end{lemma}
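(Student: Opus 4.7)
The plan is to integrate the Heisenberg equation of motion for $Q_j$ and exploit the fact that the commutator $[H,Q_j]$ reduces, up to a constant, to the momentum operator $P_j$, which is controllable uniformly along the evolution. Since $V$ is a multiplication operator, $[V,Q_j]=0$, so the commutator reduces to $[H_0,Q_j]$. A direct computation gives $i[H_0,Q_j]=2P_j$ in the continuum and $i[H_0,Q_j]=-P_j$ in the discrete setting (where $P_j$ is the finite-difference operator defined in the excerpt). Integrating the Heisenberg identity $\tfrac{d}{dt}(e^{itH}Q_je^{-itH})=e^{itH}i[H,Q_j]e^{-itH}$ then yields, on a suitable dense subspace,
\begin{align*}
    e^{itH}Q_j e^{-itH}\psi = Q_j\psi + c\int_0^t e^{isH}P_je^{-isH}\psi\,ds,
\end{align*}
with $c=2$ (continuum) or $c=-1$ (discrete).

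For the discrete setting, $P_j$ is bounded on $\ell^2(\bbZ^d)$ with $\|P_j\|\leq 2$, so taking norms immediately gives $\|Q_je^{-itH}\psi\|\leq \|Q_j\psi\|+2|t|\|\psi\|\leq C(1+|t|)\|\psi\|_{S_j(\bbZ^d)}$. For the continuum setting, I would use that $V\in L^\infty$ forces the form domain of $H$ to equal $Q(H_0)=H^1(\bbR^d)$, and that the quadratic form is conserved by the unitary evolution: $\langle H e^{-isH}\psi,e^{-isH}\psi\rangle=\langle H\psi,\psi\rangle$. Combined with
\begin{align*}
    \|P_j\phi\|^2\leq \|\nabla\phi\|^2=\langle H_0\phi,\phi\rangle\leq \langle H\phi,\phi\rangle+\|V\|_\infty\|\phi\|^2,
\end{align*}
this gives the uniform bound $\|P_je^{-isH}\psi\|\leq C\|\psi\|_{H^1}$ independently of $s$. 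Applying the interchange lemma (preceding Lemma~\ref{radinSimonLem}) to the integrand and invoking unitarity of $e^{itH}$ yields
\begin{align*}
    \|Q_je^{-itH}\psi\|\leq \|Q_j\psi\|+2\int_0^{|t|}\|P_je^{-isH}\psi\|\,ds\leq C(1+|t|)\|\psi\|_{S_j(\bbR^d)}.
\end{align*}

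The main obstacle is justifying the Heisenberg identity rigorously: the natural domain $S_j$ need not be invariant under $e^{-itH}$, and in the continuum case $Q_j H\psi$ need not be $L^2$ even for $\psi\in H^2\cap D(Q_j)$. I would handle this by first working with the bounded truncations $F_N(Q_j)$ introduced in the preceding interchange lemma. On a core (e.g., $C_c^\infty(\bbR^d)$ in the continuum, or finitely supported sequences in the discrete case), the commutator identities with $F_N(Q_j)$ are straightforward and the bound above holds with $F_N(Q_j)$ in place of $Q_j$. Sending $N\to\infty$ via monotone convergence then simultaneously shows that $e^{-itH}\psi\in D(Q_j)$ and yields the desired estimate on the core; a final density argument using the bound itself extends the conclusion from the core to all of $S_j$.
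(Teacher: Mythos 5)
Your argument is correct, and for the discrete case it is essentially identical to the paper's: both integrate the Heisenberg identity $(Q_j)_H(t)\psi = Q_j\psi + \int_0^t i[H,Q_j]_H(s)\psi\,ds$, observe $[V,Q_j]=0$ so $i[H,Q_j]$ reduces to $\pm P_j$, and use boundedness of the discrete $P_j$ to get the linear-in-$t$ bound.

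For the continuum case, however, the paper simply cites Theorem~2.1 of \cite{RadinSimon} and does not supply an argument. You instead give a self-contained sketch whose key new input is conservation of the quadratic form along the unitary evolution: since $V\in L^\infty$ gives $Q(H)=Q(H_0)=H^1$, the equality $\langle He^{-isH}\psi,e^{-isH}\psi\rangle=\langle H\psi,\psi\rangle$ together with $\|P_j\phi\|^2\leq\langle H\phi,\phi\rangle+\|V\|_\infty\|\phi\|^2$ yields $\|P_je^{-isH}\psi\|\leq C\|\psi\|_{H^1}$ uniformly in $s$, which is exactly what is needed in the integral. This is a reasonable and essentially standard route; it buys self-containedness at the cost of having to justify the Heisenberg identity on a suitable domain. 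Your plan for that last step needs a small correction: the truncation $F_N(q_j)=|q_j|\chi_{|q_j|\le N}$ from the preceding interchange lemma is only Lipschitz, so $[H_0,F_N(Q_j)]$ has distributional pieces (at $q_j=0$ and $q_j=\pm N$) and the stated "straightforward commutator identities" do not hold literally for it. One should instead use a smooth truncation, e.g.\ $f_N(q_j)=q_j\varphi(q_j/N)$ with $\varphi\in C_c^\infty$ equal to $1$ near $0$, for which $[-\Delta,f_N]= -2f_N'\partial_j-f_N''$ has $f_N'$ uniformly bounded and $f_N''\to 0$; the rest of your $N\to\infty$ and closedness-of-$Q_j$ argument then goes through. (Also, a small sign slip: $i[-\Delta,Q_j]=-2P_j$, not $+2P_j$, but this is irrelevant to the norm bound.)
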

\begin{proof}
    The first claim follows from Theorem 2.1 of \cite{RadinSimon}.
    The second follows immediately from the triangle inequality: 
    \begin{align*}
        \|(Q_j)_H(t)\psi\|\leq \|Q_j\psi\|+\left|\int_0^t\|(P_{j})_H(s)\|\|\psi\| ds\right|\leq \|Q_j\psi\|+2|t|\|\psi\|
    \end{align*}
    yielding the bound with $C=2\sqrt{2}$.

\end{proof}
With these results in hand, we can prove the following:
\begin{lemma}
    \label{BTForFree}
    Let $V\in C^1(\bbR^d)\cap L^\infty(\bbR^d) $ and $\nabla V\in L^\infty(\bbR^d)$. Suppose that for some index $j$, $\psi\in \calD(Q_j)\cap H^1(\bbR^d)$ satisfy
    \begin{align}\label{S1Assum}
        &\int\limits_0^\infty (1+t)\|Ve^{-itH_0}\psi\|_{S_j(\bbR^d)} \, dt <\infty,
    \end{align}
    and also
    \begin{align}\label{P_2Assum}
        &\int\limits_0^\infty (1+t)\|Ve^{-itH_0}P_j\psi\|_{L_2(\bbR^d)} \, dt <\infty.
    \end{align}
    Then we have that $\Omega \psi $ and $\Omega P_j\psi$ exist and furthermore
    \begin{align}\label{asyVel}
        \lim\limits_{t\rightarrow \infty }\frac{(Q_j)_H(t)}{t}\Omega \psi=2\Omega P_j\psi.
    \end{align}
    In particular, we have that $\Omega\psi$ exhibits ballistic transport in the $Q_j$-direction.\par
    Similarly, for $V\in \ell^\infty(\bbZ^d)$, let $\psi \in\calD(Q_j) $ satisfy
    \begin{align}\label{S1AssumDisc}
        &\int\limits_0^\infty (1+t)\|Ve^{-itH_0}\psi\|_{S_j(\bbZ^d)} \, dt <\infty,
    \end{align}
     and also
    \begin{align}\label{P_2AssumDisc}
        &\int\limits_0^\infty (1+t)\|Ve^{-itH_0}P_j\psi\|_{\ell^2(\bbZ^d)} \, dt <\infty.
    \end{align}
    Then we have that $\Omega \psi $ exists and furthermore
    \begin{align}\label{asyVelDis}
        \lim\limits_{t\rightarrow \infty }\frac{(Q_j)_H(t)}{t}\Omega \psi=\Omega P_j\psi.
    \end{align}
\end{lemma}

\begin{proof}
    First, we note that the assumption \eqref{S1Assum} (or \eqref{S1AssumDisc}) shows that
    \begin{align*}
        \int\limits_0^\infty \|Ve^{-itH_0}\psi\| \, dt <\infty,
    \end{align*}
    which implies the existence of $\Omega \psi $ by Cook's method \cite{RSVol3}, and similarly, assumptions \eqref{P_2Assum} and \eqref{P_2AssumDisc} show the existence of $\Omega P_j\psi$ .\par 
   To prove \eqref{asyVel}, or \eqref{asyVelDis}, we will start by establishing that $\Omega \psi \in D(Q_j)$, by showing that $\{Q_j\Omega(t)\psi\}$ is Cauchy. This suffices because $\Omega(t)\psi\in D(Q_j)$ for all $t$ and $Q_j$ is a closed operator. By Lemma \ref{radinSimonLem} $\Omega (t)\psi \in D(Q_j)$ for any $t>0$, so we can write for $t>s>0$ 
    \begin{align*}
        &\|Q_j(\Omega(t)-\Omega(s))\psi\|=\|Q_j\int\limits_s^te^{i\tau H}(iV)e^{-i\tau H_0}\psi d\tau \|\leq \int\limits_s^t\|Q_je^{i\tau H}Ve^{-i\tau H_0}\psi\|_{L^2}d\tau\\
        &\leq \int\limits_s^t\|Q_je^{i\tau H}\|_{S_j\mapsto L^2/\ell^2}\|Ve^{-i\tau H_0}\psi\|_{S_j}d\tau,
    \end{align*}
    where $S_j$ is either $S_j(\bbR^d)$ or $S_j(\bbZ^d)$, here and in the following.\par
    Again by Lemma \ref{radinSimonLem}, we have
    \begin{align*}
        \|Q_je^{i\tau H}\|_{S_j\mapsto L^2/\ell^2}\leq C(1+\tau),
    \end{align*}
    so that 
    \begin{align*}
        &\|Q_j(\Omega(t)-\Omega(s))\psi\|\leq \int\limits_s^t(1+\tau)\|Ve^{-i\tau H_0}\psi\|_{S_j}\:d\tau.
    \end{align*}
    This is the tail of $\int\limits_0^\infty(1+\tau)\|Ve^{-i\tau H_0}\psi\|_{S_j}d\tau$, which converges by assumption. Thus, the sequence is Cauchy.\par
    Now, we note that the intertwining property $e^{-itH}\Omega=\Omega e^{-itH_0}$ implies that for $t>0$
    \begin{align*}
        &\|Q_je^{-itH}\Omega \psi -Q_je^{-itH_0}\psi\|=\|Q_j\Omega e^{-itH_0} \psi -Q_je^{-itH_0}\psi\|=\|Q_j(\Omega-\Id) e^{-itH_0}\psi\|\\
        &= \left\|Q_j\int\limits_0^\infty e^{isH}iVe^{-isH_0}e^{-itH_0}\psi\, ds\right\|\leq \int\limits_0^\infty \|Q_je^{isH}\|_{S_j\mapsto L^2/\ell^2}|\|Ve^{-i(s+t)H_0}\psi\|_{S_j}ds\\
        &\leq \int\limits_0^\infty (1+s)\|Ve^{-i(s+t)H_0}\psi\|_{S_j}\:ds
    \end{align*}
    as in the above.  
    Then we can write:
    \begin{align*}
         &\|Q_je^{-itH}\Omega \psi -Q_je^{-itH_0}\psi\|\leq \int\limits_0^\infty (1+s)\|Ve^{-i(s+t)H_0}\psi\|_{S_j}\:ds\\
         &= \int\limits_t ^\infty (1+s-t)\|Ve^{-isH_0}\psi\|_{S_j}\:ds \leq \int\limits_t ^\infty (1+s)\|Ve^{-isH_0}\psi\|_{S_j}\:ds.
    \end{align*}
    Thus, we may conclude that
    \begin{align*}
        \lim\limits_{t\rightarrow\infty } \|Q_je^{-itH}\Omega \psi -Q_je^{-itH_0}\psi\|\leq  \lim\limits_{t\rightarrow\infty }\int\limits_t ^\infty (1+s)\|Ve^{-isH_0}\psi\|_{S_j}ds=0
    \end{align*}
    by our assumptions. \par
    This implies that 
    \begin{align*}
        \lim\limits_{t\rightarrow \infty }\|e^{itH}Q_je^{-itH}\Omega \psi-e^{-itH}Q_je^{-itH_0}\psi \|=0,
    \end{align*}
    so in order to establish \eqref{asyVel}, it is enough to show that $\{\frac{1}{t}e^{-itH}Q_je^{-itH_0}\psi\}$ converges. \par 
    In the continuous setting, we use the Fourier transform to see that
    \begin{align*}
        \frac{1}{t}e^{itH}Q_je^{-itH_0}\psi&=\frac1te^{itH}\mathcal{F}^{-1}(2te^{-it|\xi_j|^2}\xi_j\hat{\psi}-e^{-it|\xi_j|^2}i\partial_{\xi_j} \hat{\psi})\\
        &=e^{itH}\mathcal{F}^{-1}(2e^{-it|\xi_j|^2}\xi_j\hat{\psi})+O_{L^2(\bbR^d)}(1/t).
    \end{align*}
    The first term is $2\Omega(t) P_j\psi $, which we established above converges.\par
    In the discrete setting, we have
     \begin{align*}
        \frac{1}{t}e^{itH}Q_je^{-itH_0}\psi&=\frac1te^{itH}\mathcal{F}^{-1}\left(-2\sin(\xi_j)te^{-it\sum\limits_{\ell=1}^d2\cos(\xi_\ell)}\hat{\psi}-e^{-it\sum\limits_{\ell=1}^d2\cos(\xi_\ell)}i\partial_{\xi_j} \hat{\psi}\right)\\
        &=e^{itH}\mathcal{F}^{-1}\left(-2\sin(\xi_j)e^{-it\sum\limits_{\ell=1}^d2\cos(\xi_\ell)}\hat{\psi}\right)+O_{\ell^2(\bbZ^d)}(1/t).
    \end{align*}
    As before, the first term is $\Omega(t)P_j\psi$. Thus, we may conclude that
    \begin{align*}
        \lim_{t\to\infty}\frac1t(Q_j)_H(t)\psi =\lim_{t\to\infty}\frac{1}{t}e^{itH}Q_je^{-itH_0}\psi=\Omega P_j\psi,
    \end{align*}
    which is clearly non-zero because $\|\Omega P_j\psi\|=\|P_j\psi\|$ is positive for $\psi\ne 0$, $\psi\in H^1(\bbR^d)$, or $\psi \in \ell^2(\bbZ^d)$, as needed.
\end{proof}
As a corollary, we obtain a transport result for potentials that decay faster than short-range:
\begin{corollary}
    Let $V\in C^1(\bbR^d)\cap L^\infty (\bbR^d)$ be such that 
    \begin{align*}
        &\|(1+R)\vec{Q}\chi_{|x|>R}V\|_{\infty }\in L^1([0,\infty),dr),&&\|(1+R)\chi_{|x|>R}\nabla V\|_{\infty }\in L^1([0,\infty),dr).
    \end{align*}
    Recall the sets
    \begin{align*}
        &\calD_a=\{\psi \in \calS(\bbR^d)\mid \supp \hat{\psi}\Subset B_a^c\},&&\calD= \bigcup\limits_{a>0}\calD_a.
    \end{align*}
    Then for all $\psi\in\calD$, $\Omega \psi $ exists and exhibits ballistic transport in all directions.
\end{corollary}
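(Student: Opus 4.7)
\medskip

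\noindent\textbf{Proof proposal.} The plan is to apply Proposition \ref{BTForFree} to each coordinate direction $j = 1,\dots,d$, so that the asymptotic velocities $\Omega P_j \psi$ are nonzero (they are nonzero because $\Omega$ is an isometry and $\hat{\psi}$ is supported where $\xi_j$ does not vanish identically — in fact one may reduce to $\psi = \psi_1 + \dots + \psi_d$ with each $\hat\psi_j$ supported away from $\{\xi_j = 0\}$ via a partition of unity). Fix $\psi \in \calD_a$ for some $a > 0$ and observe that $P_j \psi$ and $\nabla \psi$ also lie in $\calD_a$, since multiplication on the Fourier side by $\xi_j$ or $\xi$ preserves the support property. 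Expanding the $S_j(\bbR^d)$ norm and applying the triangle inequality,
\begin{equation*}
\|V e^{-itH_0}\psi\|_{S_j(\bbR^d)} \;\leq\; \|V e^{-itH_0}\psi\| + \|(\nabla V)e^{-itH_0}\psi\| + \|V e^{-itH_0}\nabla \psi\| + \|Q V e^{-itH_0}\psi\|,
\end{equation*}
so it suffices to verify, for any $\varphi \in \calD_a$ and for $W \in \{V,\nabla V, QV\}$, that $(1+t)\|W e^{-itH_0}\varphi\|$ is integrable on $[0,\infty)$. This also takes care of condition \eqref{P_2Assum}, applied with $\varphi = P_j\psi \in \calD_a$.

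The single key estimate is the following cutoff argument. Fix $\varphi \in \calD_a$ and $R = at/2$. Then for any of the three choices of $W$ above,
\begin{align*}
\|W e^{-itH_0}\varphi\| \;\leq\; \|W \mathbf{1}_{|x|\leq R}\|_{\infty}\,\|\chi_R^c e^{-itH_0}\varphi\| + \|W \mathbf{1}_{|x|>R}\|_{\infty}\,\|\varphi\|.
\end{align*}
The first term is controlled by Proposition \ref{propEst1}: since $R/t = a/2 < a$, we have $\|\chi_R^c e^{-itH_0}\varphi\| \leq C_\ell(1+t)^{-\ell}$ for any $\ell$, while $\|W\mathbf{1}_{|x|\leq R}\|_\infty$ is bounded by $\|V\|_\infty$, $\|\nabla V\|_\infty$, or $R\,\|V\|_\infty = (at/2)\|V\|_\infty$ respectively; choosing $\ell$ sufficiently large makes $(1+t)$ times this piece integrable. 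The second term is handled by the decay hypotheses: changing variables $R = at/2$,
\begin{equation*}
\int\limits_0^\infty (1+t)\,\|W \mathbf{1}_{|x|>at/2}\|_{\infty} \, dt \;=\; \frac{2}{a}\int\limits_0^\infty \Bigl(1 + \tfrac{2R}{a}\Bigr)\|W \mathbf{1}_{|x|>R}\|_{\infty}\, dR,
\end{equation*}
which is finite by hypothesis when $W = \nabla V$ or $W = QV$. For $W = V$, I would combine the boundedness of $V$ on $t \in [0,1]$ with the trivial pointwise bound $\|V\mathbf{1}_{|x|>R}\|_\infty \leq R^{-1}\|QV\mathbf{1}_{|x|>R}\|_\infty$ for $R \geq 1$ to reduce to the $W = QV$ case. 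This verifies the hypotheses of Proposition \ref{BTForFree} for each $j$, which gives ballistic transport of $\Omega\psi$ in every direction.

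There is no real obstacle here; the proof is an exercise in splitting kinetic (fast-moving) and potential (compact in $|x|$) contributions, together with one application of the stationary-phase estimate of Proposition \ref{propEst1} and a change of variables. The only bookkeeping subtlety is recovering integrability of $(1+t)\|V\mathbf{1}_{|x|>at/2}\|_\infty$ from the hypothesis on $QV$, which costs an extra $1/R$ factor and forces splitting the $t$-integral near the origin.
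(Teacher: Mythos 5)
Your proof is correct and follows essentially the same route as the paper's: split each norm into the region $|x|<\varepsilon t$ (controlled by the non-stationary phase estimate of Proposition \ref{propEst1}) and $|x|>\varepsilon t$ (controlled by the decay hypotheses), then integrate. In fact, your reduction of the $W=V$ term to the $QV$ hypothesis via $\|V\mathbf{1}_{|x|>R}\|_\infty\leq R^{-1}\|QV\mathbf{1}_{|x|>R}\|_\infty$ is a detail the paper's proof passes over silently, so your write-up is, if anything, slightly more careful.
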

\begin{proof}
    This proof is very similar to the proof of Proposition \ref{scatteringProp}, only allowing for short-range decay instead of compact support.\par 
    We note that we can write 
    \begin{align*}
        \|Ve^{-itH_0}\psi \|_{H_1}&\leq C( \|Ve^{-itH_0}\psi \|+ \|\nabla Ve^{-itH_0}\psi \|+\|V e^{-itH_0}\nabla \psi\|)\\
        &\leq C( \|Ve^{-itH_0}\psi \|+ \|\nabla Ve^{-itH_0}\psi \|+\|V e^{-itH_0}P \psi\|).
    \end{align*}
    We start with the first term and consider $0<\varepsilon<a$
    \begin{align*}
         \|Ve^{-itH}\psi\|&\leq \|V\chi_{|x|<\varepsilon t}e^{-itH_0}\psi\|+\|V\chi_{|x|>\varepsilon t}e^{-itH_0}\psi\|\\
        &\leq \|V\|_{\infty}\|\chi_{|x|<\varepsilon t}e^{-itH_0}\psi\|+\|V\chi_{|x|>\varepsilon t}\|_{\infty}\|\psi\|.
    \end{align*}
    By  writing 
    \begin{align*}
        e^{-itH_0}\psi(x)=(2\pi)^{-\frac{d}{2}}\int\limits_{\bbR^d}e^{i(x\cdot\xi-t\xi^2)}\hat{\psi}(\xi)\:d\xi
    \end{align*}
    and the principle of non-stationary phase, namely the Corollary to Theorem XI.14 of \cite{RSVol3}, one is able to get similar statement to Proposition \ref{DiscNonStat} in the continuum, and conclude that  since $\psi \in \calD_a$, for any large enough $\ell>0$, and some $C>0$ we have the bound
    \begin{align*}
        \|\chi_{|x|<\varepsilon t}e^{-itH_0}\psi\|^2\leq \int\limits_{B_{\varepsilon t}}C(1+|x|+|t|)^{-\ell} \, dx\leq C \frac{(\varepsilon t)^d}{(1+|t|)^\ell}<C(1+|t|)^{-\ell+d}.
    \end{align*}
    Thus, we get that 
    \begin{align*}
         \|Ve^{-itH}\psi\|&\leq C(1+|t|)^{-\ell}+\|V\chi_{|x|>\varepsilon t}\|_{\infty}\|\psi\|.
    \end{align*}
    Noting that $P \psi \in \calD_a$ as well, we can get the following
    \begin{align*}
        \|Ve^{-itH_0}\psi \|_{H_1}&\leq C[(1+|t|)^{-\ell}+\|V\chi_{|x|>\varepsilon t}\|_{\infty}\|\psi\|+\|V\chi_{|x|>\varepsilon t}\|_{\infty}\|P\psi\|+\|\nabla V\chi_{|x|>\varepsilon t}\|_{\infty}\|\psi\|],
    \end{align*}
    so in particular we have that 
    \begin{align*}\label{S1Assum}
        &\int\limits_0^\infty (1+t)\|Ve^{-itH_0}\psi\|_{H^1} \, dt\\
        &<C[\int\limits_0^\infty (1+t)(1+|t|)^{-\ell}\, dt+\|\psi\|_{H^1} \int\limits_0^\infty (1+t)\|V\chi_{|x|>\varepsilon t}\|_{\infty}\, dt+\|\psi\|\int\limits_0^\infty (1+t)\|\nabla V\chi_{|x|>\varepsilon t}\|_{\infty}\, dt].
    \end{align*}
    By assumption, we get that the last terms are finite, and the first is finite for $\ell>0$ large enough. This shows that 
    \begin{align*}
        &\int\limits_0^\infty (1+t)\|Ve^{-itH_0}P\psi\| \, dt <\infty
    \end{align*}
    as well. \par
    Finally, it remains to show that
    \begin{align*}
        \int\limits_0^\infty (1+t)\|\vec{Q}Ve^{-itH_0}\psi\| \, dt<\infty. 
    \end{align*}
    We note that for $\psi \in \calD_a$ we can write
    \begin{align*}
        \|\vec{Q}Ve^{-itH}\psi\|&\leq \|\vec{Q}V\chi_{|x|<\varepsilon t}e^{-itH_0}\psi\|+\|\vec{Q}V\chi_{|x|>\varepsilon t}e^{-itH_0}\psi\|\\
        &\leq \varepsilon t\|V\|_{\infty}\|\chi_{|x|<\varepsilon t}e^{-itH_0}\psi\|+\|\vec{Q}V\chi_{|x|>\varepsilon t}\|_{\infty }\|\psi\|.
    \end{align*}
    By the above, we have that for some large enough $\ell>0$, and some $C>0$
    \begin{align*}
        \|\chi_{|x|<\varepsilon t}e^{-itH_0}\psi\|\leq C(1+|t|)^{-\ell},
    \end{align*}
    so we find
    \begin{align*}
        \int\limits_0^\infty (1+t)\|\vec{Q}Ve^{-itH_0}\psi\| \, dt<\int\limits_0^\infty (1+t)C(1+|t|)^{-\ell} \, dt+C\int\limits_0^\infty (1+t)\|\vec{Q}V\chi_{|x|>\varepsilon t}\|_{\infty}\|\psi\| \, dt.
    \end{align*}
    The second term is finite by assumption, and the first converges for $\ell$ large enough.
\end{proof}

\section{A Form of Weierstrass Preparation}\label{WeierstrassPreparation}
Recall that a Weierstrass polynomial on $U\subset \bbR^2$, a neighborhood of $(0,0)$, is a function of the form
\begin{align*}
    F(x,y)=x^n+g_{n-1}(y)x^{n-1}+\cdots+g_0(y)
\end{align*}
where each $g_k$ is analytic and satisfies $g_k(0)=0$.\par
Throughout this section, we will use the following notation. For a function $g$, we denote by $Z_g$ its zero set. If $f$ is a monic polynomial with complex roots $\{\alpha_i\}$, we recall that its discriminant is (up to a sign)
\begin{align*}
    \Delta=\prod\limits _{j\neq k}^n (\alpha_j-\alpha_k).
\end{align*}
The complex analog of the following result may be gleaned from Chapter 1 of \cite{chirka} (see also Chapter 6 of \cite{lebl}). However, we include a proof below for the convenience of the reader.
\begin{lemma}\label{WeierstrassPreparationLemma}
    Let $V$ be an open subset of $ \bbR^2 $ containing $ (0,0) $. Suppose that $f:\bbR^2\to \mathbb R$ is analytic, vanishes at $(0,0)$, and $ z\mapsto f(0,z) $ is not identically $ 0 $.
    Then there exist open intervals $ I $ and $J$ containing $ 0 $ with $I\times J\subset V$ and a Weierstrass polynomial $F$ such that $ Z_F=Z_f $ on $ I\times J $ and the discriminant of $ F $ is not identically $0$ on $ I $.
\end{lemma}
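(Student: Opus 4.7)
The approach is to reduce to the classical complex-analytic Weierstrass preparation theorem by complexifying $f$, descend the decomposition back to $\bbR^2$ via a uniqueness-and-symmetry argument, and then refine the resulting Weierstrass polynomial by passing to its squarefree part so that its discriminant does not vanish identically. The hypothesis that the relevant partial function at the origin is not identically zero is what pins down a finite order of vanishing $n \geq 1$ and makes the classical theorem applicable.

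First, since $f$ is real analytic near $(0,0)$, I would extend it to a holomorphic function $\tilde f$ on a bidisc $D_1 \times D_2 \subset \bbC^2$ using the convergence of its Taylor series, and record the Schwarz reflection identity $\tilde f(\bar z, \bar w) = \overline{\tilde f(z,w)}$ coming from reality of $f$ on $\bbR^2$. Applying the classical complex Weierstrass preparation theorem to $\tilde f$ yields, on a possibly smaller bidisc, a decomposition $\tilde f = \tilde u \cdot \tilde F$ with $\tilde u$ a holomorphic unit at the origin and $\tilde F(x,y) = x^n + \tilde g_{n-1}(y)\, x^{n-1} + \cdots + \tilde g_0(y)$ a Weierstrass polynomial of degree $n$. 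The uniqueness clause in Weierstrass preparation, combined with applying the reflection symmetry to both sides of $\tilde f = \tilde u \tilde F$, forces both $\tilde u$ and $\tilde F$ to be self-conjugate under reflection, so their restrictions $u_0$ and $F_0$ to the real bidisc are real analytic, and $F_0$ is a Weierstrass polynomial with real coefficients. Since $u_0(0,0) \neq 0$, there is an open rectangle $I \times J$ about the origin on which $u_0$ is nonvanishing, and therefore $Z_{F_0} = Z_f$ on $I \times J$.

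A priori $F_0$ may have repeated factors in the UFD $\bbC\{y\}[x]$ (which is a UFD because $\bbC\{y\}$ is a DVR, by Gauss's lemma), in which case its discriminant vanishes identically. To remedy this I would factor $F_0 = \prod_i F_i^{a_i}$ into irreducibles and set $F = \prod_i F_i$ to be the squarefree part. Three verifications then complete the proof: (i) $F$ is monic in $x$ with $F(x,0) = x^{\deg F}$, hence a Weierstrass polynomial, since each $F_i(x,0)$ is forced to be a power of $x$ because $F_0(x,0)=x^n$; (ii) $F$ has real coefficients, because complex conjugation permutes the factorization of $F_0$ and non-real irreducible factors therefore occur in conjugate pairs whose products are real; (iii) $Z_F = Z_{F_0} = Z_f$ on $I \times J$, since a polynomial and its squarefree part share the same real zero locus. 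Finally, since $F$ is squarefree in $\bbC\{y\}[x]$, its discriminant with respect to $x$, being the resultant of $F$ and $\partial_x F$, is a nonzero element of $\bbC\{y\}$, hence a real analytic function of $y$ that is not identically zero on $J$ (and hence not identically zero on any interval of the coefficient variable).

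I expect the main obstacle to be precisely this passage from the complex factorization back to a real squarefree Weierstrass polynomial. One must verify that pairing conjugate irreducible factors truly restores real coefficients, that the resulting product still satisfies the strict form of the definition (monic in $x$ with all non-leading coefficients vanishing at $y=0$), and that the real zero set is genuinely unchanged by the replacement $F_0 \mapsto F$. The uniqueness in Weierstrass preparation and the conjugation invariance of irreducible factorization handle these points cleanly, but the bookkeeping of conjugate pairs warrants some care.
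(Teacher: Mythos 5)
Your argument is correct, but it takes a genuinely different route from the paper's. You invoke the classical complex Weierstrass preparation theorem as a black box, descend to real coefficients via the reflection identity $\tilde f(\bar z,\bar w)=\overline{\tilde f(z,w)}$ combined with the uniqueness clause, and then arrange the discriminant condition algebraically: pass to the squarefree part in the UFD $\bbC\{y\}[x]$, check it is still a real Weierstrass polynomial with the same zero locus, and note that $\mathrm{Res}_x(F,\partial_x F)$ is a nonzero element of $\bbC\{y\}$. The paper never invokes the preparation theorem; it constructs the reduced polynomial directly. Rouch\'e's theorem localizes the roots, the residue formula $\frac{1}{2\pi i r_j}\int_{C_j} z\,f_w'(z)/f_w(z)\,dz$ shows that the geometrically distinct roots vary analytically on the open set $U$ where their number is maximal, Rad\'o's theorem upgrades the continuously extended discriminant to an analytic function on all of $D_1$ (so $D_1\setminus U$ is discrete), and Riemann's removable singularity theorem extends $F(w,z)=\prod_j(z-\alpha_j(w))$ across $D_1\setminus U$. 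Since the paper's $F$ is built only from geometrically distinct roots, it is squarefree by construction and no factorization step is needed; your approach trades Rad\'o's theorem (the most exotic ingredient in the paper's proof) for the uniqueness clause of preparation plus the commutative algebra of $\bbC\{y\}[x]$, and has the minor advantage of producing a genuinely real-valued $F$. Both arguments are complete. One bookkeeping point: you prepare in the first variable, which requires regularity $f(\cdot,0)\not\equiv 0$, whereas the stated hypothesis is $f(0,\cdot)\not\equiv 0$; the paper's proof in fact prepares in the second variable (which is why its discriminant is a function on $I$, as the statement asserts, at the cost of clashing with the displayed definition of a Weierstrass polynomial). This is purely a labeling discrepancy inherited from the paper, but you should prepare in the variable in which $f$ is actually regular, and you should also record that the rectangle may need to shrink once more so that the coefficients of the squarefree factors all converge.
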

\begin{proof}
	Let $ \tilde{f} $ be the complexification of $ f $, i.e. its extension to a complex analytic function on $ \tilde{V} $ a neighborhood of $ (0,0) $ in $ \bbC^2 $, which contains $ V $. It suffices to find a Weierstrass polynomial $ F $ on some polydisc $ D_1\times D_2 \subset \bbC^2$ centered at $ (0,0) $ with $Z_F=Z_{\tilde{f}}$ with a discriminant that does not vanish identically in $D_1\times D_2$, because by the identity principle for single variable analytic functions, the discriminant cannot vanish on a real interval $ I\subset D_1 $ without vanishing on $D_1\times D_2$.\par
	With this in mind, by the assumption $ f(0,z)\not\equiv 0 $ and the identity principle, we may choose a disc centered at $ 0 $, $ D_2\subset \bbC $, so that $ 0 $ is the only zero of $ f(0,z) $ inside $ D_2 $. By Rouch\'{e}'s theorem and continuity of $f(\cdot,z)$, we may find a disc $ D_1\subset \bbC $ containing $ 0 $ and so that for each $ w\in D_1 $, the function $ f_w(z):D_2\rightarrow \bbC $ given by $ f_w(z)=f(w,z)$  has exactly $ m $ zeroes (counted with multiplicity) in $ D_2'=\frac{1}{2}D_2 $. Now let $ r:=\sup_{w\in D_1}|\{z\in D_2:f_w(z)=0\}|\leq m$ be the maximal number of \emph{geometrically distinct} zeroes of $ f_w(z) $ for all $w\in D_1$. Let $ U $ be the set of $ w\in D_1 $ so that $ z\mapsto f(w,z) $ has $ r $ geometrically distinct zeroes. \par
	For any $ w_0\in U $, let $ \alpha_1(w_0),\ldots,\alpha_r(w_0) $ be the roots of $ f_{w_0}(z) $ labeled arbitrarily. For each $ 1\leq j \leq r $, let $ C_j\subset D_2 $ be a circle containing only $ \alpha_j(w_0) $ so that $ f_{w_0}(z) $ is non-zero on each $ C_j $. By continuity and the compactness of the $ C_j $, we may find a neighborhood of $ w_0 $, $ N $, so that $ f_{w}(z) $ is non-zero on each $ C_j $ for all $ w\in N $. The argument principle shows that for all $w\in N$, the number of zeroes (counted with multiplicity) of $ f_{w}(z) $ in each $ C_j $ is $ r_j $, the multiplicity of $ \alpha_j(w_0)$ as a zero of $ f_{w_0}(z) $. However, by the maximality of $ r $, there can only be exactly one geometrically distinct zero, which is given by the formula
	\begin{align*}
		\frac{1}{2\pi i r_j}\int\limits_{C_j}z\frac{f_{w}'(z)}{f_{w}(z)}\; dz
	\end{align*}
	as a standard consequence of the residue theorem. Since this expression is clearly analytic in $ w $ (for instance, by differentiating with $ \frac{\partial }{\partial \bar{w}}$ under the integral), we see that each $ \alpha_j $ extends to an analytic function in some neighborhood of $ w $, and in particular that $ U $ is open.\par
	Now, let $ \Delta(w):U\rightarrow\bbC $ be given by
	\begin{align*}
	\Delta(w)=\Pi_{j\neq k }(\alpha_j(w)-\alpha_k(w)),
	\end{align*}
	which is well-defined and analytic because the expression is independent of the labeling of the $ \alpha_j $. We wish to show that by defining $ \Delta(w) $ to be $ 0 $ on $ D_1\setminus U $ we obtain a continuous function or equivalently that if $ w_n\rightarrow w\in D_1\setminus U $ then $ \Delta (w_n)\rightarrow 0 $. Thus, as roots approach the boundary of $ U $ in $D_1$, they must join, as the number of roots is smaller outside of $ U $. To be precise, for each $ n $ let $ \alpha_1(w_n),\ldots, \alpha_r(w_n) $ be the $ r $ distinct roots of $ f_{w_n}(z) $ and observe that the sequence $ \alpha_1(w_n) $ lies in the compact set $ \overline{D_2'} $ and therefore a subsequence $ \alpha_1(w_{n_k}) $ converges to some $ \alpha_1(w) $. Applying this argument iteratively $r$-times, beginning with the sequence $ w_{n_k} $, we find a subsequence of $ \{w_n\}_{n=1}^\infty $, call it $ \{w_{n_\ell}\}_{\ell=1}^\infty $,  so that $ \alpha_j(w_{n_\ell})\rightarrow \alpha_j(w) $. By the continuity of $ f(z,w) $, each $ \alpha_j(w) $ is a root of $ f_w $ but because $ w\not\in U $ we must have that $ \alpha_j(w)=\alpha_k(w) $ for some $ j\neq k $. Examining the definition of $ \Delta(w) $, we have shown that every subsequence $ w_n\rightarrow w\in D_1\setminus U $ has a further subsequence along which $ \Delta (w_{n_\ell})\rightarrow 0 $, which establishes the desired continuity.\par
	We have seen that we may extend $ \Delta $ to all of $ D_1 $ as a function which is continuous on all of $ D_1 $ and analytic on $ U $. It now follows from Rad\'{o}'s theorem \cite[Section A1.5]{chirka} that $ \Delta $ is in fact analytic on $ D_1 $ so that its zero set (i.e. $ D_1\setminus U $) consists of isolated points. Now observe that on $ U\times D_2 $ 
	\begin{align*}
		F(w,z)=(z-\alpha_1(w))\cdots (z-\alpha_r(w))
	\end{align*}
for each $ z $ is well-defined and analytic because it is independent of the labeling of the roots. Moreover, it is bounded because each $ \alpha_j(w) $ lies in $ D_2' $ so that for each $ z $ we may extend it to an analytic function on $ D_1 $ by Riemann's removable singularity theorem. Thus, $ F $ is a Weierstrass polynomial whose roots coincide with those of $ f $ on $ U $ and therefore also on all of $ D_1 $ by the identity principle. By construction, its roots are simple outside of $ Z_\Delta $ so its discriminant is non-vanishing there.
\end{proof}

\bibliographystyle{amsplain}
\bibliography{bibliography}
\end{document}